\documentclass[twoside,10pt]{article}

\usepackage[margin=0.5in]{geometry}

\usepackage{fullpage}
\usepackage{enumerate}
\usepackage{authblk}
\usepackage[colorinlistoftodos]{todonotes}
\usepackage{verbatim}
\usepackage{section, amsthm, textcase, setspace, amssymb, lineno, amsmath, amssymb, amsfonts, latexsym, fancyhdr, longtable}
\usepackage{epsfig, graphicx, pstricks,pst-grad,pst-text,tikz, tkz-berge,colortbl}
\usepackage{epsf}
\usepackage{graphicx, color}
\usepackage{float}
\usepackage[rflt]{floatflt}
\usepackage{amsfonts}
\usepackage{latexsym}
\usepackage{xcolor}
\usepackage{mathbbol}
\usepackage{bm}
\usepackage[titletoc,toc,title]{appendix}
\usepackage{titlesec}

\definecolor{darkgreen}{rgb}{0, 0.5, 0}

\newtheorem{theorem}{Theorem}
\newtheorem{lemma}{Lemma}
\newtheorem{corollary}{Corollary}
\newtheorem{obs}{Observation}
\newtheorem{conj}{Conjecture}
\newtheorem{definition}{Definition}
\newtheorem{ex}{Example}
\newtheorem{case}{Case}

\newcommand{\mf}{\mathfrak}

\newcommand{\A}{{\rm A}}

\newcommand{\dd}{\hspace{.1cm}|\hspace{.1cm}}

\newcommand{\Z}{\mathbb Z}
\newcommand{\C}{\mathbb C}

\newcommand{\ind}{{\rm ind \hspace{.1cm}}}

\title{On compositions associated to seaweed subalgebras of $\mathfrak{sl}(n)$ }

\author[*]{Vincent E. Coll, Jr.}
\author[*]{Aria Dougherty}
\author[**]{Matthew Hyatt}
\author[ ]{Andrew W. Mayers}
\author[*]{Nick W. Mayers}

\affil[*]{Department of Mathematics, Lehigh University, Bethlehem, PA, 18015}

\affil[**]{Department of Mathematics, Pace University, NY, USA}

\begin{document}
\maketitle
\noindent
\begin{abstract}
\noindent
A standard seaweed subalgebra of $A_{n-1}=\mathfrak{sl}(n)$ may be parametrized by a pair of compositions of the positive integer $n$.  For all $n$ and certain $k(n)$, we provide 
closed-form formulas and the generating functions for $C(n,k)$ -- the number of parametrizing pairs which yield a seaweed subalgebra of $\mathfrak{sl}(n)$ of index $k$. Our analysis sets the framework for addressing similar questions in the other classical families.

\end{abstract}

\noindent
\textbf{Keywords} Seaweed algebras, index of a  Lie algebra, meanders, generating functions
%
%

\bigskip
\noindent
\textbf{Mathematics Subject Classification (2000)} 17B20, 05E15

\tableofcontents
\section{Introduction}

The index of a Lie algebra $\mathfrak{g}$ is an important algebraic invariant and is bounded by the algebra's rank \textbf{\cite{Panyushev1}}:  ind $\mathfrak{g}\leq$ rk $\mathfrak{g}$, with equality when $\mathfrak{g}$ is reductive. More formally, let $\mathfrak{g}$ be a finite dimensional Lie algebra over 
$\C$.  The index of $\mf{g}$ is given by 
  
\[\ind \mf{g}=\min_{f\in \mf{g^*}} \dim  (\ker (B_f)),\]

\noindent where $f$ is a linear form on $\mf{g}$ and $B_f$ is the associated skew-symmetric \textit{Kirillov form} defined by $B_f(x,y)=f([x,y])$ for all $x,y\in\mf{g}$.

Seaweed algebras were first introduced by Dergachev and A. Kirillov in \textbf{\cite{dk}}, where the impetus for their study was to find a setting in which the computation of a Lie algebra's index might be carried out with relative ease.
To this end, they consider certain subalgebras of $\mathfrak{sl}(n)$ which are parametrized by a pair of compositions of $n$.  Recall that a composition of $n$ is a sequence $\textbf{\textit{a}}=(a_1,\dots,a_m)$ where each $a_i$ is a positive integer and $\sum a_i=n$.  If $V$ is an $n$-dimensional vector space with a basis 
$\mathcal{E}=\{e_1,\dots, e_n \}$, let $\textbf{\textit{a}}=(a_1,\dots,a_m)$ and $\textbf{\textit{b}}=(b_1,\dots,b_l)$ be two compositions of $n$ and consider the flags 

$$
\{0\} \subset V_1 \subset \cdots \subset V_{m-1} \subset V_m =V~~~\text{and}~~~ V=W_0\supset W_1\supset \cdots \supset W_t=\{0\}, 
$$
where $V_i=\text{span}\{e_1,\dots, e_{a_1+\cdots +a_i}\}$ and $W_j=\text{span}\{e_{b_1+\cdots +b_j+1},\dots, e_n\}$.  The subalgebra of $A_{n-1}=\mathfrak{sl}(n)$ preserving these flags is called a \textit{seaweed Lie algebra}, or simply \textit{seaweed}, and is denoted by the symbol(s) $\mathfrak{p}^\A_n(\textbf{\textit{a}},\textbf{\textit{b}})=$ 
$\displaystyle \frac{a_1|\cdots|a_m}{b_1|\cdots|b_t}$, which we interchangeably refer to as the \textit{type} of the seaweed. When one of the compositions is trivial, the seaweed is called \textit{parabolic}, and is called \textit{maximal parabolic }if the remaining composition consists of the sum of two terms.  

A basis-free definition is available, but is not necessary for the present discussion.  The evocative ``seaweed'' is descriptive of the shape of the algebra when exhibited in matrix form.   For example, the seaweed algebra $\frac{2|4}{1|2|3}$ consists of traceless matrices of the form depicted on the left side of Figure \ref{fig:seaweed}, where * indicates the possible non-zero entries from the ground field, which we tacitly assume is an algebraically closed field of characteristic zero.

\begin{figure}[H]
$$\begin{tikzpicture}[scale=0.75]
\draw (0,0) -- (0,6);
\draw (0,6) -- (6,6);
\draw (6,6) -- (6,0);
\draw (6,0) -- (0,0);
\draw [line width=3](0,6) -- (0,4);
\draw [line width=3](0,4) -- (2,4);
\draw [line width=3](2,4) -- (2,0);
\draw [line width=3](2,0) -- (6,0);

\draw [line width=3](0,6) -- (1,6);
\draw [line width=3](1,6) -- (1,5);
\draw [line width=3](1,5) -- (3,5);
\draw [line width=3](3,5) -- (3,3);
\draw [line width=3](3,3) -- (6,3);
\draw [line width=3](6,3) -- (6,0);

\draw [dotted] (0,6) -- (6,0);

\node at (.5,5.4) {{\LARGE *}};
\node at (.5,4.4) {{\LARGE *}};
\node at (1.5,4.4) {{\LARGE *}};
\node at (2.5,4.4) {{\LARGE *}};
\node at (2.5,3.4) {{\LARGE *}};
\node at (2.5,2.4) {{\LARGE *}};
\node at (2.5,1.4) {{\LARGE *}};
\node at (2.5,0.4) {{\LARGE *}};
\node at (3.5,2.4) {{\LARGE *}};
\node at (3.5,1.4) {{\LARGE *}};
\node at (3.5,0.4) {{\LARGE *}};
\node at (4.5,2.4) {{\LARGE *}};
\node at (4.5,1.4) {{\LARGE *}};
\node at (5.5,2.4) {{\LARGE *}};
\node at (5.5,1.4) {{\LARGE *}};
\node at (4.5,0.4) {{\LARGE *}};
\node at (5.5,0.4) {{\LARGE *}};

\node at (.5,6.4) {1};
\node at (2,5.4) {2};
\node at (4.5,3.4) {3};
\node at (-0.5,4.9) {2};
\node at (1.5,1.9) {4};

\end{tikzpicture}\hspace{1.5cm}\begin{tikzpicture}[scale=1.3]
	\def\Node{\node [circle,  fill, inner sep=2pt]}
	\node at (0,0) {};
    \Node[label=left:$v_1$] (1) at (0,1.8) {};
	\Node[label=left:$v_2$] (2) at (1,1.8) {};
	\Node[label=left:$v_3$] (3) at (2,1.8) {};
	\Node[label=left:$v_4$] (4) at (3,1.8) {};
	\Node[label=left:$v_5$] (5) at (4,1.8) {};
	\Node[label=left:$v_6$] (6) at (5,1.8) {};
	\draw (1) to[bend left=50] (2);
	\draw (3) to[bend left=50] (6);
	\draw (4) to[bend left=50] (5);
	\draw (2) to[bend right=50] (3);
	\draw (4) to[bend right=50] (6);
\end{tikzpicture}$$
\caption{A seaweed of type $\frac{2|4}{1|2|3}$ and its associated meander}
\label{fig:seaweed}
\end{figure}
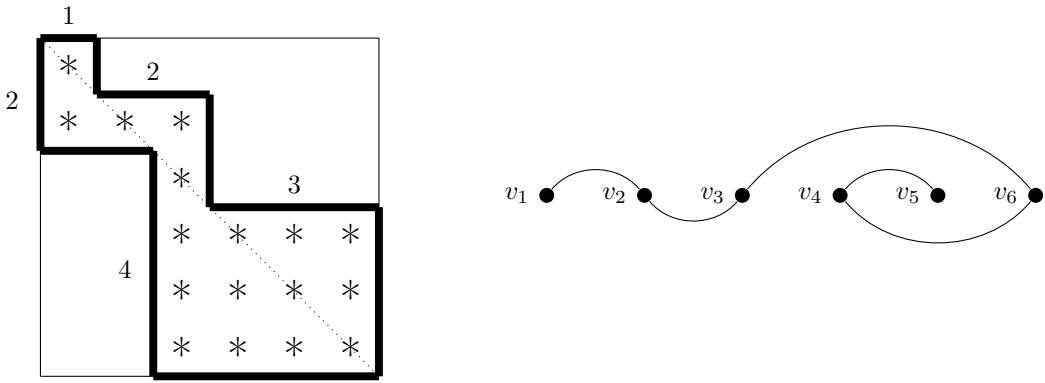

\noindent

Continuing with \textbf{\cite{dk}}, the authors introduce a graph-theoretic representation of the seaweed, called a \textit{meander} and establish that the seaweed's index can be computed by counting the number and types of the connected components of the associated meander -- a fact we will make heavy use of in our study below.  Subsequently, Coll et al \textbf{\cite{Collar}} defined five 
index-preserving, graph-theoretic moves which can be deterministically and iteratively applied to any meander, allowing the meander to be ``wound-down'' to its simplicial homotopy type - a conjugation invariant more granular than the index (see \textbf{\cite{Coll2}}).  These moves can be reversed and applied to the available homotopy types to build up any meander, and so a seaweed, thereby providing an algorithm to construct a seaweed algebra of any rank $n-1$ and index $k$. This allows us to enumerate the number of pairs of compositions of $n$ to yield $C(n,k)$ --  the number of parametrizing pairs which correspond to a seaweed subalgebra of $\mathfrak{sl}(n)$ of index $k$.  An examination of the $C(n,k)$'s suggests certain recursive relationships which we prove and then use to establish closed-form formulas, and attendant generating functions, for $C(n,k)$ when $k$ is equal to $n-1$, $n-2$, or $n-3$. We also provide unrestricted formulas for the number, up to conjugation, of maximal parabolic subalgebras of $\mf{sl}(n)$ with index $k$, as well as providing $C(n,k)$ for a seaweed of type $\frac{a_1|a_2}{b_1|b_2}$.  These latter results follow from the only two available linear greatest common divisor formulas for a seaweed's index based on the parts of its parametrizing pairs.

Our study is inspired by recent work of Duflo who, after the fashion of Coll et al \textbf{\cite{Collar}}, uses certain index-preserving operators on the set of compositions corresponding to a Frobenius 
(index zero) seaweed subalgebra of $\mathfrak{sl}(n)$ to show that if $t$ is the number of parts in the defining compositions, then the 
number of such compositions is a rational polynomial of degree $\left[\frac{t}{2}\right]$ evaluated at $n$. See \textbf{\cite{df}}, Theorem 1.1 (b). 

The structure of the paper is as follows.  In Section \ref{sec: prelim}, we 
develop the formal constructions which allow for the development of the 
$C(n,k)$ table -- which is noted at the beginning of Section \ref{sec: formulas}.  Closed-form formulas for $C(n, n-1)$, $C(n, n-2)$, and $C(n, n-3)$, along with attendant generating functions, are developed in Sections \ref{subsec:n-1}, \ref{subsec:n-2}, and \ref{subsec:n-3}, respectively. The maximal parabolic case
and the development of $C(n,k)$ for the seaweed of type $\frac{a_1|a_2}{b_1|b_2}$ 
are treated separately in Section \ref{subsec:maximal} and 4.3, respectively.

This initial study sets the framework for similar investigations in the other classical families of Lie algebras, where necessary``meandric technologies" have recently become available.

\section{Preliminaries}\label{sec: prelim}
\subsection{Meanders}

In \textbf{\cite{dk}}, Dergachev and A. Kirillov showed how to 
associate to a seaweed $\mathfrak{p}^\A_n(\textbf{\textit{a}},\textbf{\textit{b}})$ a planar graph called a \textit{meander}, which we denote $M^\A_n(\textbf{\textit{a}} \dd \textbf{\textit{b}})$ and construct as follows.  First, label the $n$ vertices of $M^\A_n(\textbf{\textit{a}} \dd \textbf{\textit{b}} )$ as $v_1, v_2, \dots , v_n$ from left to right along a
horizontal line. We then place edges above the horizontal line, called top edges, according to $\textbf{\textit{a}}$ as follows. 
Let $\bm{a}=(a_1,a_2,\dots , a_m)$, and partition the set of vertices into a set partition
by grouping together the first $a_1$ vertices, then the next $a_2$ vertices, and so on, lastly grouping together the final $a_m$ vertices. We call each set within a set partition a \textit{block}. For each block in the set partition determined by $\bm{a}$, add an edge from the first vertex of the block to the last vertex of the block, then add an edge between the second vertex of the block and the second to last vertex of the block, and so on within each block. More explicitly, given vertices $v_j,v_k$ in a block of 
size $a_i$, there is an edge between them if and only if 
$j+k=2(a_1+a_2+\dots+a_{i-1})+a_i+1$.
In the same way, place bottom edges below the horizontal line of vertices according to the blocks in the partition determined by $\bm{b}$. See the right side of Figure \ref{fig:seaweed}.

Every meander consists of a disjoint union of cycles, paths, and points (degenerate paths).  The main result of \textbf{\cite{dk}} is that the index of a seaweed can be computed by counting the number and type of these components in its associated meander. 

\begin{theorem}\label{thm:dk}  \rm{(Dergachev and A. Kirillov, \textbf{\cite{dk}})} 
If $\mf{p}$ is a seaweed subalgebra of $\mf{sl}(n)$, then
$$\ind \mf{p} =2C + P -1,$$
where $C$ is the number of cycles and $P$ is the number of paths in the associated meander.
\end{theorem}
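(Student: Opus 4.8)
The plan is to use the identity $\ind \mf{p} = \dim \mf{p} - \max_f \operatorname{rank} B_f$, reducing the theorem to computing $\dim \ker B_f$ for a single cleverly chosen form $f$ and reading the answer off the meander. Working in the basis of admissible matrix units $E_{ij}$ (together with the traceless diagonal units), the bracket gives $B_f(E_{ij},E_{kl}) = \delta_{jk}\,f(E_{il}) - \delta_{li}\,f(E_{kj})$. The key device is to take $f$ supported exactly on the coordinates dual to the edges of $M^\A_n(\mbf{a}\dd\mbf{b})$, with generic nonzero values, so that $f(E_{kl}) \ne 0$ precisely when $\{v_k,v_l\}$ is a meander edge.

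Writing a general element as $x = \sum_i d_i E_{ii} + \sum_{a\ne b} x_{ab} E_{ab}$, I would translate $x \in \ker B_f$ into a linear system by pairing against each basis element. Pairing against a diagonal unit $E_{kk}$ produces, at each vertex $v_k$, a conservation relation among the off-diagonal coordinates sitting on the meander edges incident to $v_k$; pairing against an edge unit $E_{kl}$ relates the difference $d_k - d_l$ to those same off-diagonal coordinates, while pairing against a non-edge unit yields a pure relation among off-diagonal coordinates. Since each vertex meets at most one top edge and at most one bottom edge, every vertex has meander-degree at most two, so the connected components of $M^\A_n(\mbf{a}\dd\mbf{b})$ are exactly paths (including isolated points) and cycles, and the relations above are correspondingly sparse.

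I would then solve the system one connected component at a time, exploiting the nested, planar structure of the meander to make the propagation triangular. On a path, the conservation relations at the degree-one endpoints start a cascade that forces every off-diagonal coordinate along the path to vanish and every diagonal difference $d_k - d_l$ across an edge to be zero; thus a path contributes a single free parameter, the common value of the $d_k$ along it. On a cycle the same cascade closes up consistently and leaves one additional free off-diagonal ``loop'' parameter, so a cycle contributes two. Summing over the $P + C$ components gives $\dim\ker B_f = P + 2C$ in the $\mf{gl}(n)$ picture, and the single tracelessness constraint $\sum_i d_i = 0$ for $\mf{sl}(n)$ lowers this to $2C + P - 1$.

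The remaining and most delicate point is to show that this meander form is regular, i.e. that it realizes $\min_f \dim\ker B_f$ and not merely an upper bound; equivalently, that the off-diagonal block of $B_f$ attains the maximal rank compatible with the meander. I expect this regularity verification — together with the careful control of the coupling between the diagonal (Cartan) coordinates and the off-diagonal ones in the triangular solve — to be the principal obstacle, whereas once the triangular structure is in place the per-component count is routine bookkeeping.
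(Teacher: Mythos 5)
Your proposal targets a statement that this paper never actually proves: Theorem \ref{thm:dk} is quoted from Dergachev and Kirillov, so the only available comparison is with their original argument, which your sketch essentially follows (a functional supported on the meander edges, with $\dim\ker B_f$ computed component by component). Your per-component bookkeeping is correct: in the $\mathfrak{gl}(n)$ picture a path contributes one free parameter and a cycle contributes two, and passing to $\mathfrak{sl}(n)$ costs exactly one, most cleanly seen not via a "tracelessness constraint on the kernel" but from the splitting of the $\mathfrak{gl}$-seaweed as the $\mathfrak{sl}$-seaweed plus the central line $\C I_n$, together with additivity of the index over direct sums of ideals.

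The genuine gap is the one you name and then set aside: regularity of the meander form $f_0$. Since $\ind \mf{p}=\min_{f\in\mf{p}^*}\dim\ker B_f$, computing $\dim\ker B_{f_0}=2C+P$ for one particular $f_0$ proves only the upper bound $\ind \mf{p}\le 2C+P-1$; the matching lower bound is the entire remaining content of the theorem, and your proposal contains no argument for it. Note that taking "generic nonzero values" on the edges does not help: genericity within the family of edge-supported forms need not realize the minimum over all of $\mf{p}^*$, since that family could a priori consist entirely of non-regular forms. To close the gap you need one of the following: (i) a proof that $f_0$ is regular, e.g.\ by showing its coadjoint orbit has maximal dimension; (ii) an independent lower bound, e.g.\ exhibiting $2C+P$ functionally independent invariants of the coadjoint action of the $\mathfrak{gl}$-seaweed; or (iii) an inductive argument that bypasses regularity entirely: show that each winding-down move of Appendix \ref{sec:app signature} preserves both the index and the quantity $2C+P$, then verify the base case where the two compositions coincide, so the seaweed is a direct sum of blocks $\mathfrak{gl}(a_i)$ whose meander has $2C+P=\sum a_i=n$. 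Route (iii) is Panyushev's inductive proof and is the one most compatible with the machinery this paper actually develops; as written, your argument stops one hard step short of a proof.
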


\noindent
\textbf{Example: }  See Figure 1, where $C=0$ and $P=1$.  Hence, the seaweed in this Figure has index 0, so is Frobenius.

\subsection{Homotopy Type}

\begin{definition}
We say that a planar graph has \textit{homotopy type} $H(a_1,a_2,\dots ,a_m)$
if its homotopy type is equivalent to the meander of type 
$\dfrac{a_1|a_2|\dots |a_m}{a_1|a_2|\dots |a_m}$.
That is, a union of $m$ non-concentric subgraphs, where each subgraph
has homotopy type $\frac{a_i}{2}$ concentric circles if $a_i$ is even, and 
$\lfloor a_i/2\rfloor$ concentric circles with a point in the center 
if $a_i$ is odd.
\end{definition}

\noindent
\textbf{Example:} A planar graph with homotopy type
$H(1,5,2)$ is homotopically equivalent to the graph in the following Figure \ref{fig:hom}.

\begin{figure}[H]
\[\begin{tikzpicture}
\def\Node{\node [circle, fill, inner sep=2pt]}
\Node at (0,0){};
\draw (0,0) circle (1.5cm);
\draw (0,0) circle (.75cm);

\Node at (-2.875,0){};

\draw (3.125,0) circle (.75cm);

\end{tikzpicture}\]
\caption{A planar graph with homotopy type $H(1,5,2)$}
\label{fig:hom}
\end{figure}
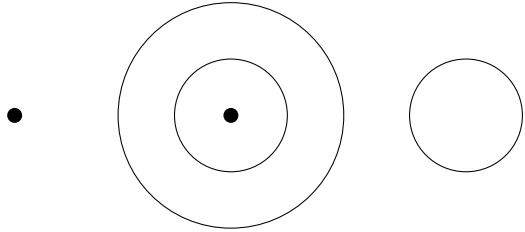

We define the \textit{homotopy type of a seaweed} to be the homotopy type of its corresponding meander.  Unlike the index, the homotopy type of a Lie algebra $\mathfrak{g}$ is not defined directly in terms of $\mathfrak{g}$'s Lie structure.  It is therefore not \textit{a priori} clear to what extent (if at all)  the homotopy type is related to the algebraic structure of the Lie algebra. In fact, the homotopy type is not an algebraic invariant, but it is a conjugation invariant in the sense of the following theorem -- which follows from Theorem 5.3 in the recent paper by Moreau and Yakimova \textbf{\cite{Mor}}.  See also, \textbf{\cite{Coll3}}.

\begin{theorem}\label{homotopy type}
Conjugate seaweed subalgebras of $\mathfrak{sl}(n)$ have the same homotopy type.
\end{theorem}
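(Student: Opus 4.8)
The plan is to reduce the claim to the cited result of Moreau and Yakimova (their Theorem 5.3) together with the combinatorial definition of homotopy type already given. The statement to prove is that conjugate seaweed subalgebras of $\mathfrak{sl}(n)$ share the same homotopy type. Since the homotopy type of a seaweed is \emph{defined} as the homotopy type of its associated meander, and this in turn is a function $H(a_1,\dots,a_m)$ of the cycle-and-point data of the meander, the real content is to show that this meandric data is a conjugation invariant. My strategy is therefore to extract from \cite{Mor}, Theorem 5.3, exactly the piece that records how the components of the meander are organized, and to argue that this organization is preserved under conjugation.

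First I would recall the relevant structure: by Theorem \ref{thm:dk}, the index of a seaweed is already determined by the pair $(C,P)$ (number of cycles and paths in the meander), and it is classical that the index is a conjugation invariant. The homotopy type refines this data, recording not just the counts but the concentric-circle structure $H(a_1,\dots,a_m)$. I would show that Theorem 5.3 of \cite{Mor} yields a conjugation-invariant description of the seaweed's associated variety (or the relevant stratification) whose combinatorial shadow is precisely the multiset of concentric-circle configurations encoded by $H$. In particular, the number of cycles of each ``nesting depth'' and the number of isolated points must be read off from an invariant attached to the algebra rather than to a particular choice of parametrizing compositions.

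The key steps, in order, would be: (i) state precisely what invariant Theorem 5.3 of \cite{Mor} attaches to a seaweed and verify it is manifestly conjugation-invariant; (ii) translate the meander's component data into the homotopy-type normal form $H(a_1,\dots,a_m)$, using the definition given above, so that two meanders are homotopy-equivalent exactly when they produce the same multiset of (even $\mapsto$ circles, odd $\mapsto$ circles-plus-point) contributions; (iii) establish a dictionary between the invariant from step (i) and the normal form from step (ii), showing they determine one another; and (iv) conclude that if two seaweeds $\mathfrak{p}$ and $\mathfrak{p}'$ are conjugate, then they share the invariant of step (i), hence the normal form of step (ii), hence the same homotopy type. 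Throughout, I would lean on the planarity of the meander, which guarantees that every component is a cycle, a path, or a point, so that the homotopy type is well-defined and finitely describable.

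\emph{The hard part} will be step (iii): making rigorous the dictionary between the geometric/representation-theoretic invariant supplied by \cite{Mor} and the purely combinatorial nesting data of the meander. The notion of homotopy type used here is, as the authors emphasize, not defined intrinsically in terms of the Lie bracket, so the bridge to \cite{Mor} must be built carefully — one must check that the finer data (concentric nesting, not merely the counts $C$ and $P$) is genuinely captured by the cited theorem and not lost in passing to the index alone. I expect the cleanest route is to cite Theorem 5.3 of \cite{Mor} for the strong statement that the entire homotopy type (and not just its index-level consequence) is an invariant of the conjugacy class, with the supporting reference \cite{Coll3} providing the meander-to-algebra correspondence, so that the proof reduces to quoting these results and verifying that our combinatorial $H(a_1,\dots,a_m)$ is the same object they consider.
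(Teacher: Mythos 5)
Your proposal takes essentially the same route as the paper: the paper offers no independent argument, stating only that the theorem ``follows from Theorem 5.3 in the recent paper by Moreau and Yakimova \textbf{\cite{Mor}}'' (with \textbf{\cite{Coll3}} as a supporting reference), which is exactly the reduction-by-citation you settle on in your final paragraph. The dictionary-building steps (i)--(iv) you sketch are more scaffolding than the paper provides, but the essential content --- quoting the cited conjugation-invariance result and identifying the combinatorial $H(a_1,\dots,a_m)$ with the object considered there --- coincides with the paper's treatment.
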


\noindent
\textit{Remark 1:  } It follows from Theorem \ref{homotopy type} that the homotopy type is a more granular invariant than the index -- and can sometimes be used to show that two seaweeds are not conjugate.  For example, the seaweeds
$\frac{5|3}{3|3|2}$ and $\frac{4|4}{2|4|2}$ have the same dimension (27), rank (7), and index (1), but have homotopy types $H(1,1)$ and $H(2)$, respectively.  So, are not conjugate.

\section{$C(n,k)$ - Formulas and Generating Functions}\label{sec: formulas}
\bigskip
\noindent
Based on the theory above, we first describe an algorithm  to compute $C(n,k)$ -- the number of parametrizing pairs which correspond to a seaweed subalgebra of $\mathfrak{sl}(n)$ of index $k$. We also provide closed-form formulas and attendant generating functions for $C(n,k)$, $k=n-1, n-2$, and $n-3$.  These are addressed in subsections \ref{subsec:n-1}, \ref{subsec:n-2}, and \ref{subsec:n-3}, respectively.  

\begin{center}
\textbf{Algorithm }
\end{center}

Let $n$ be given.

\begin{itemize}
\item List all pairs of compositions summing to $n$,
\item For each pair in this list, construct a meander $M$,
\item Use the signature moves of Appendix \ref{sec:app signature} to wind $M$ down to the simplicial graph which is used to define the homotopy type of $M$,
\item Use Theorem \ref{thm:dk} to compute the index $k$, of $M$. 
\end{itemize}

We obtain the following table. 

\begin{table}[H]
\centering
\begin{tabular}{|c|c|c|c|c|c|c|c|c|c|c|}
\hline
\textbf{$n$\textbackslash $k$} & \textbf{0}    & \textbf{1}     & \textbf{2}     & \textbf{3}     & \textbf{4}     & \textbf{5}     & \textbf{6}     & \textbf{7}    & \textbf{8}    & \textbf{9}   \\ \hline
\textbf{1}        & \cellcolor{blue!25}1    & 0     & 0     & 0     & 0     & 0     & 0     & 0    & 0    & 0   \\ \hline
\textbf{2}        & \cellcolor{red!25}2    & \cellcolor{blue!25}2     & 0     & 0     & 0     & 0     & 0     & 0    & 0    & 0   \\ \hline
\textbf{3}        & \cellcolor{yellow!25}6    & \cellcolor{red!25}6     & \cellcolor{blue!25}4     & 0     & 0     & 0     & 0     & 0    & 0    & 0   \\ \hline
\textbf{4}        & 14   & \cellcolor{yellow!25}26    & \cellcolor{red!25}16    & \cellcolor{blue!25}8     & 0     & 0     & 0     & 0    & 0    & 0   \\ \hline
\textbf{5}        & 34   & 86    & \cellcolor{yellow!25}80    & \cellcolor{red!25}40    & \cellcolor{blue!25}16    & 0     & 0     & 0    & 0    & 0   \\ \hline
\textbf{6}        & 68   & 272   & 330   & \cellcolor{yellow!25}226   & \cellcolor{red!25}96    & \cellcolor{blue!25}32    & 0     & 0    & 0    & 0   \\ \hline
\textbf{7}        & 150  & 764   & 1236  & 1058  & \cellcolor{yellow!25}600   & \cellcolor{red!25}224   & \cellcolor{blue!25}64    & 0    & 0    & 0   \\ \hline
\textbf{8}        & 296  & 2060  & 4216  & 4526  & 3118  & \cellcolor{yellow!25}1528  & \cellcolor{red!25}512   & \cellcolor{blue!25}128  & 0    & 0   \\ \hline
\textbf{9}        & 586  & 5248  & 13528 & 17596 & 14720 & 8674  & \cellcolor{yellow!25}3776  & \cellcolor{red!25}1152 & \cellcolor{blue!25}256  & 0   \\ \hline
\textbf{10}       & 1140 & 12876 & 40820 & 64102 & 63380 & 44480 & 23154 & \cellcolor{yellow!25}9120 & \cellcolor{red!25}2560 & \cellcolor{blue!25}512 \\ \hline
\end{tabular}
\caption{$C(n, k)$}
\label{tab:main}
\end{table}

\subsection{$C(n, n-1)$}\label{subsec:n-1}

\noindent
We first draw attention to the blue cells in Table 1. The result of this short section provides a formula for $C(n, n-1)$.
Consider the combinatorial formula for the index of a meander given by Theorem \ref{thm:dk}.
If a meander with $n$ vertices has index $n-1$, then
all cycles in the meander must contain exactly two vertices, and all paths must contain exactly one vertex. This can only happen if 
the top and bottom compositions defining the meander are equal.
Since there are $2^{n-1}$ compositions of $n$, we have $C(n, n-1)=2^{n-1}$. From this, the following generating function is immediate.

\begin{eqnarray}\label{codim 1}
\sum_{n>0}C(n, n-1)x^n=\frac{x}{1-2x}.
\end{eqnarray}

\subsection{$C(n, n-2)$}\label{subsec:n-2}

\noindent This section concerns the red cells in Table \ref{tab:main}. First, we determine a closed-form formula for the given entries and then develop the associated generating function. 

\begin{theorem}\label{2diag}
$C(n, n-2)=n2^{n-2}$.
\end{theorem}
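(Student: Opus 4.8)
The plan is to convert the index condition into a statement about the two compositions and then finish with a routine enumeration. Write $\bm{a}=(a_1,\dots,a_m)$ and $\bm{b}=(b_1,\dots,b_t)$, and recall that in $M^{\A}_n(\bm{a}\dd\bm{b})$ each vertex meets at most one top edge and at most one bottom edge, so every component is a cycle (of even length, since edges alternate top/bottom), a path, or an isolated point. First I would pin down the component structure forced by $\ind\mathfrak{p}=n-2$. By Theorem \ref{thm:dk}, $\ind\mathfrak{p}=2C+P-1=n-2$ means $2C+P=n-1$, while the $n$ vertices are distributed among the components, so $n-(2C+P)=\sum(\text{cycle length}-2)+\sum(\text{path length}-1)=1$. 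Every summand is nonnegative and every cycle term is even, so all cycles have length $2$, every path but one is a single point, and the exceptional path has exactly two vertices (one edge). Hence $\ind\mathfrak{p}=n-2$ if and only if the meander is a disjoint union of $2$-cycles, isolated points, and a single one-edge path.

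Next I would encode this with arcs. Let $\mathcal{A}(\bm{a})$ denote the set of vertex pairs joined within the blocks of $\bm{a}$; the pairing rule $j+k=2(a_1+\cdots+a_{i-1})+a_i+1$ is the same whether $\bm{a}$ sits on top or bottom, so the top edges of $M^{\A}_n(\bm{a}\dd\bm{b})$ are exactly $\mathcal{A}(\bm{a})$ and the bottom edges are $\mathcal{A}(\bm{b})$. A pair in $\mathcal{A}(\bm{a})\cap\mathcal{A}(\bm{b})$ carries a double edge and is automatically an isolated $2$-cycle, since each endpoint then uses both its edges on that pair. I would then prove the meander has the required form precisely when the symmetric difference $\mathcal{A}(\bm{a})\,\triangle\,\mathcal{A}(\bm{b})$ is a single pair: one unmatched edge forces a two-vertex path, and conversely a single unmatched edge must have both endpoints free of the opposite edge type (otherwise a second unmatched edge would appear), so it is a clean one-edge path. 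Thus $C(n,n-2)=\#\{(\bm{a},\bm{b}):|\mathcal{A}(\bm{a})\,\triangle\,\mathcal{A}(\bm{b})|=1\}$.

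To count these pairs I would first check that $\bm{a}\mapsto\mathcal{A}(\bm{a})$ is injective (blocks are recovered from the concentric nesting, and an odd block's center sits inside nested arcs whereas a singleton block does not, so the two are distinguishable). Splitting the pairs by which arc set is larger and using the symmetry of $\triangle$, one gets $C(n,n-2)=2\sum_{\bm{a}}r(\bm{a})$, where $r(\bm{a})$ is the number of compositions $\bm{b}$ with $\mathcal{A}(\bm{b})=\mathcal{A}(\bm{a})\setminus\{e\}$. The only arcs whose removal leaves a realizable arc set are the outermost arcs of blocks of size $\ge 2$ (deleting an interior arc destroys the concentric nesting), and removing such an arc amounts to splitting that block, $(c+2)\mapsto(1,c,1)$ for $c\ge1$ and $(2)\mapsto(1,1)$. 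Hence $r(\bm{a})$ equals the number of parts of $\bm{a}$ that are $\ge 2$, and it remains to evaluate $\sum_{\bm{a}}r(\bm{a})$ over all compositions $\bm{a}$ of $n$. Marking one part $\ge 2$ gives the generating function $\left(\dfrac{1-x}{1-2x}\right)^{2}\dfrac{x^{2}}{1-x}=\dfrac{(1-x)x^{2}}{(1-2x)^{2}}$, whose coefficient of $x^{n}$ is $n2^{\,n-3}$. Therefore $C(n,n-2)=2\cdot n2^{\,n-3}=n2^{\,n-2}$.

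The main obstacle is the equivalence in the second paragraph: I must argue carefully that a single symmetric-difference edge yields exactly a two-vertex path (no longer path and no entanglement with the surrounding $2$-cycles), and I must identify precisely which single-arc edits keep an arc set realizable as a composition — namely only the outermost arc of each block — together with the injectivity of $\bm{a}\mapsto\mathcal{A}(\bm{a})$ needed to pass from arc sets back to compositions. Once these structural facts are in place, the generating-function evaluation of $\sum_{\bm{a}}r(\bm{a})$ is entirely routine.
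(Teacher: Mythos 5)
Your proposal is correct, and after the shared first step it follows a genuinely different route from the paper's. Both arguments begin identically: from Theorem \ref{thm:dk}, index $n-2$ forces the meander to be a union of $2$-cycles and isolated points together with exactly one two-vertex path (your vertex-count identity $n-(2C+P)=\sum(\mathrm{cycle\ length}-2)+\sum(\mathrm{path\ length}-1)=1$ is a clean way to see this). From there the paper simply asserts that such a meander must have the form \eqref{eq:n,n-2} -- i.e., that the lone path edge is the outermost edge of a block, with the opposite composition splitting that block as $i\mapsto 1|i-2|1$ -- and then counts by cases on whether $\bm{a}$, $\bm{c}$ are empty, evaluating nested geometric sums that telescope to $n2^{n-2}$. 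You instead encode each composition by its arc set $\mathcal{A}(\bm{a})$, show that index $n-2$ is equivalent to $|\mathcal{A}(\bm{a})\,\triangle\,\mathcal{A}(\bm{b})|=1$, and prove (via injectivity of $\bm{a}\mapsto\mathcal{A}(\bm{a})$ and the concentric-nesting closure property) that the only realizable one-arc deletions are removals of outermost arcs of parts of size $\ge 2$; this is precisely a rigorous derivation of the form \eqref{eq:n,n-2} that the paper takes for granted. Your count then becomes $2\sum_{\bm{a}}\#\{\text{parts of }\bm{a}\text{ that are}\ge 2\}$, which the marked-composition generating function $\frac{x^2(1-x)}{(1-2x)^2}$ evaluates at once to $n2^{n-3}$. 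What each approach buys: the paper's explicit case analysis is more concrete and its summation machinery is reused nearly verbatim in the six-case proof of Lemma \ref{3diagstr} for $C(n,n-3)$, while your version supplies the structural justification the paper elides, avoids the nested sums entirely, and gives the generating function of Corollary \ref{2diagc1} for free, since $\sum_n C(n,n-2)x^n = 2\cdot\frac{x^2(1-x)}{(1-2x)^2}=\frac{2x^2-2x^3}{(1-2x)^2}$.
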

\begin{proof}
Given a meander with $n$ vertices, we  consider all meanders whose index is $n-2$. First, note that the number of vertices in any cycle is always even. If a cycle contains at least 4 vertices, then the index will be at most $n-3$, so this cannot happen. It follows that a meander whose index is $n-2$ must consist of cycles containing exactly two vertices, one path containing exactly two vertices, and all of the remaining paths must contain exactly one vertex. Furthermore, in order to obtain such a configuration, the unique path containing exactly two vertices must be the outer most edge in a block.
Such a meander must have the form 
\begin{equation}\label{eq:n,n-2}
\dfrac{\bm{a}|i|\bm{c}}{\bm{a}|1|i-2|1|\bm{c}}
\hspace{.5cm}\text{or}\hspace{.5cm}
\dfrac{\bm{a}|1|i-2|1|\bm{c}}{\bm{a}|i|\bm{c}},
\end{equation}
for some integer $i$ such that $2\leq i \leq n$, where
$\bm{a}$ and $\bm{c}$ are (possibly empty) compositions.
Here, a block of size $\bm{a}$ denotes a sequence of blocks 
whose sizes are the parts of $\bm{a}$.


If we first suppose that $\bm{a}$ is empty and $\bm{c}$ is nonempty,  
then the parts of $\bm{c}$ must sum to $n-i$, where $2\leq i\leq n-1$.  So, there
are $2\left(2^{n-i-1}\right)=2^{n-i}$ choices for $\bm{c}$, where the extra factor
of 2 takes into account our two choices in \eqref{eq:n,n-2}.
We get an identical contribution to $C(n,n-2)$ if $\bm{c}$ empty and $\bm{a}$ is nonempty,
and there are 2 additional meanders when both $\bm{a}$ and $\bm{c}$ are empty.
Summarizing, if either $\bm{a}$ or $\bm{c}$ or both are empty, we get a contribution of
\[2+2\sum_{i=2}^{n-1}2^{n-i}=2\sum_{i=2}^n 2^{n-i}=2^n-2.\]



Next, suppose that both $\bm{a}$ and $\bm{c}$ are nonempty. Letting $j$ denote
the sum of the parts of $\bm{a}$, there are $2\cdot2^{j-1}\cdot2^{n-i-j-1}=2^{n-i-1}$
choices for the compositions $\bm{a}$ and $\bm{c}$, where again the extra factor
of 2 takes into account our two choices in \eqref{eq:n,n-2}. 
Summing over $i$ and $j$ we get a contribution of
\[\sum_{i=2}^{n-2}\sum_{j=1}^{n-i-1}2^{n-i-1}
=2+2^{n-2}(n-4)=2+n2^{n-2}-2^{n}.\]



Combining both cases above we find that
\[C(n,n-2)
=2^{n}-2+2+n2^{n-2}-2^n=n2^{n-2}.\]

\end{proof}

\noindent 
Using the above formula we are able to determine a generating function for the $C(n, n-2)$.

\begin{corollary}\label{2diagc1}
For $n\ge 2$
\begin{eqnarray}\label{codim 2}
\sum_{n>0}C(n, n-2)x^n=\frac{2x^2-2x^3}{(1-2x)^2}.
\end{eqnarray}
\end{corollary}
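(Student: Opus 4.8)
The plan is to derive the generating function directly from the closed-form formula $C(n,n-2) = n2^{n-2}$ established in Theorem \ref{2diag}, rather than re-examining the combinatorics. First I would write the target sum as
\begin{equation}\label{eq:plan-start}
\sum_{n>0} C(n,n-2)\,x^n = \sum_{n\ge 2} n\,2^{n-2}\,x^n,
\end{equation}
noting that the $n=1$ term vanishes since $C(1,-1)=0$, so the effective summation begins at $n=2$. The strategy is to recognize this as a weighted geometric series and evaluate it by relating it to the standard identity $\sum_{n\ge 0} y^n = \tfrac{1}{1-y}$ and its derivative $\sum_{n\ge 1} n\,y^{n-1} = \tfrac{1}{(1-y)^2}$.

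The key algebraic step is to factor out powers to expose a geometric series in the variable $y = 2x$. I would rewrite the summand as $n\,2^{n-2}x^n = \tfrac{x}{2}\cdot n\,(2x)^{n-1}$, so that
\begin{equation}\label{eq:plan-diff}
\sum_{n\ge 2} n\,2^{n-2}x^n = \frac{x}{2}\sum_{n\ge 2} n\,(2x)^{n-1} = \frac{x}{2}\left(\sum_{n\ge 1} n\,(2x)^{n-1} - 1\right).
\end{equation}
Applying the differentiated geometric series with $y=2x$ gives $\sum_{n\ge 1} n\,(2x)^{n-1} = \tfrac{1}{(1-2x)^2}$, and substituting this into \eqref{eq:plan-diff} yields $\tfrac{x}{2}\bigl(\tfrac{1}{(1-2x)^2}-1\bigr)$.

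The remaining work is purely routine simplification: placing $\tfrac{1}{(1-2x)^2}-1$ over the common denominator $(1-2x)^2$ gives a numerator of $1-(1-2x)^2 = 4x-4x^2$, and multiplying by $\tfrac{x}{2}$ produces $\tfrac{2x^2-2x^3}{(1-2x)^2}$, which is exactly the claimed expression \eqref{codim 2}. I do not anticipate a genuine obstacle here, since the corollary is a formal consequence of an already-proved formula; the only point requiring mild care is bookkeeping the index shift and the extraneous low-order terms (the subtracted $1$ and the vanishing $n=1$ contribution) so that the geometric-series identities are applied over their correct ranges of validity. One could alternatively verify the result by extracting coefficients from the proposed rational function and confirming they reproduce $n2^{n-2}$, but the direct summation above is the cleaner route.
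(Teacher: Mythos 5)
Your derivation is correct and is exactly the route the paper intends: the corollary is stated as an immediate consequence of Theorem \ref{2diag} ($C(n,n-2)=n2^{n-2}$), with the routine summation left implicit, and your computation via the differentiated geometric series $\sum_{n\ge 1} n\,(2x)^{n-1}=\frac{1}{(1-2x)^2}$ supplies precisely that missing bookkeeping. The algebra checks out, so there is nothing to add.
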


\subsection{$C(n, n-3)$}\label{subsec:n-3}

The results of this section concern the yellow cells of Table \ref{tab:main}. First, we find a formula for $C(n, n-3)$ consisting of multiple terms, the majority of which involve sums. This proof is similar to the initial argument of Theorem~\ref{2diag} concerning $C(n, n-2)$, but as should be expected is more complicated and requires six cases. For this reason we defer the proof to Appendix \ref{sec:app proof}. Utilizing this unwieldy formula we are able to prove that $C(n, n-3)$ satisfies a surprising recursive relation for large enough values of $n$. From this recursive relation we determine a generating function for $C(n, n-3)$ which leads to a much more compact closed form formula for the same values.

\begin{lemma}\label{3diagstr}
We have that $$C(n, n-3)=4\sum_{m=4}^{n-2}\sum_{i=1}^{n-m-1}2^{n-m-2}+8\sum_{m=4}^{n-1}2^{n-m-1}+4\sum_{m=4}^{n-3}\sum_{i=1}^{n-m-2}\sum_{j=1}^{n-m-i-1}2^{n-m-3}(m-3)+$$ $$+12\sum_{m=4}^{n-2}\sum_{i=1}^{n-m-1}2^{n-m-2}(m-3)+12\sum_{m=4}^{n-1}2^{n-m-1}(m-3)+4(n-3)+2\sum_{i=1}^{n-5}2^{n-6}+2^{n-3}+$$ $$+2\sum_{m=3}^{n-2}\sum_{i=1}^{n-m-1}2^{n-m-2}(m-2)+4\sum_{m=3}^{n-1}2^{n-m-1}(m-2)+2(n-2)+4\sum_{i=1}^{n-4}2^{n-5}+2^{n-1}+4$$
\end{lemma}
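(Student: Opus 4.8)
The plan is to mimic the structure of the proof of Theorem~\ref{2diag}, but now enumerate the meanders of index $n-3$ by the finer combinatorial types that Theorem~\ref{thm:dk} forces. Writing $\ind = 2C + P - 1 = n-3$ and recalling that each $2$-vertex cycle contributes $2$ to the index while each single-vertex path (point) contributes $1$, with longer cycles and paths ``wasting'' vertices relative to the index, I would first classify which multisets of component types can occur. Since a cycle has an even number of vertices and contributes $2$ regardless of size beyond the ``defect'' it introduces, and a path on $p$ vertices contributes $1$ while consuming $p$ vertices, the deficit $n - \ind = 3$ must be accounted for by exactly one of the following local configurations: a single path on $4$ vertices together with all other components being $2$-cycles or points; a path on $2$ vertices together with a $4$-vertex cycle and the rest $2$-cycles/points; two disjoint $2$-vertex paths with the rest $2$-cycles/points; or a single $6$-vertex cycle with the rest $2$-cycles/points (and the two possibilities arising from swapping top/bottom). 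This case split is what produces the promised six cases.

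First I would, for each admissible component profile, write down the canonical block form of the top and bottom compositions that realizes it, exactly as equation~\eqref{eq:n,n-2} does for the $n-2$ case. The key structural fact, inherited from the $n-2$ argument, is that the ``extra'' non-trivial component must sit as an outermost edge within a block, so the defect is localized and the surrounding compositions $\bm{a}$ and $\bm{c}$ on either side are free. For each case I would parametrize by the size $m$ of the distinguished block carrying the defect, by the split point(s) $i$ (and $j$ where a second free region appears), and count the free compositions $\bm{a}$, $\bm{c}$ as powers of $2$ (a composition of $s$ contributing $2^{s-1}$), with an overall factor of $2$ for the top/bottom symmetry wherever the two rows are genuinely distinct. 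This is precisely where the nested sums $\sum_m \sum_i 2^{n-m-2}$, $\sum_m \sum_i \sum_j 2^{n-m-3}(m-3)$, and the like originate: the factors $(m-3)$ and $(m-2)$ count the internal placements of the defect inside a block of the given size, and the boundary terms such as $4(n-3)$, $2^{n-3}$, $2^{n-1}$, and the lone constant $4$ arise from the degenerate subcases where $\bm{a}$ or $\bm{c}$ is empty or where the meander is small.

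The main obstacle, as the paper itself signals by deferring the full proof to Appendix~\ref{sec:app proof}, is the bookkeeping: ensuring the six cases are genuinely exhaustive and mutually exclusive, and that every degenerate boundary configuration (empty $\bm{a}$, empty $\bm{c}$, both empty, or defect adjacent to the block boundary) is counted exactly once with the correct power of $2$ and no double counting between the top/bottom-swapped families. I would discharge this by treating the ``generic'' contributions (both flanking compositions nonempty) with the triple/double sums, then carefully peeling off the empty-flank and small-$n$ corrections as separate additive terms, verifying against the explicit small-$n$ yellow-cell values in Table~\ref{tab:main} (e.g.\ $n=4$ gives $26$, $n=5$ gives $80$) as a consistency check. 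The output is deliberately left as an unsimplified sum-of-terms formula precisely because the simplification to a clean closed form is postponed; here the goal is only to establish the raw count, so I would stop once each of the six cases has been translated into its corresponding summand and the grand total assembled verbatim as in the statement.
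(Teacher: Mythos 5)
There is a genuine gap here, and it sits at the very foundation of your argument: your classification of the component profiles that can yield index $n-3$ is wrong. By Theorem \ref{thm:dk}, a meander consisting only of $2$-vertex cycles and isolated points already has index $n-1$ (since there $2C+P=n$), so the total ``defect'' that must be distributed among larger components to reach index $n-3$ is $2$, not $3$: a cycle on $2c$ vertices carries defect $2c-2$, and a path on $p$ vertices carries defect $p-1$. The admissible profiles are therefore exactly three: (i) one $4$-vertex cycle, (ii) one $3$-vertex path, or (iii) two $2$-vertex paths, in each case with all remaining components being $2$-cycles and points. Of the four profiles you list, only ``two disjoint $2$-vertex paths'' is correct: a $4$-vertex path gives index $n-4$, a $2$-vertex path together with a $4$-vertex cycle gives index $n-4$, and a $6$-vertex cycle gives index $n-5$. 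The source of the error is visible in your own normalization: with $2C+P-1=n-3$, the all-$2$-cycles-and-points configuration has $n-\mathrm{ind}=1$, not $0$, so your target $n-\mathrm{ind}=3$ corresponds to defect $2$; you have in effect charged the constant $-1$ once per path instead of once globally. Indeed, only one of your four profiles even satisfies your own equation $n-\mathrm{ind}=3$.

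Because the classification is wrong, the case analysis cannot assemble into the stated formula. The summands carrying the factor $(m-2)$ come precisely from the $3$-vertex-path profile (the paper's Cases 5 and 6, split by whether some block contains both endpoints of that path), the summands carrying $(m-3)$ come from two $2$-vertex paths lying in blocks that share no vertices (Case 3), and the leading terms $4\sum_{m}\sum_{i}2^{n-m-2}$, $8\sum_{m}2^{n-m-1}$ together with the trailing constant $4$ come from the $4$-vertex-cycle profile and from two $2$-paths nested inside a single block (Cases 1 and 2) --- all of which are absent from your list, while your extra profiles contribute nothing to $C(n,n-3)$. A small sanity check confirms this: for $n=4$, the meanders $\frac{4}{2|2}$ (a $4$-cycle) and $\frac{1|2|1}{3|1}$ (a $3$-vertex path plus a point) both have index $1=n-3$, but neither is covered by your cases, so your count could not reach $C(4,1)=26$ from Table \ref{tab:main}. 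The correct starting point is the three-profile decomposition above, after which the paper's six geometric subcases and the empty/nonempty analysis of the flanking compositions proceed as you describe.
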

\begin{proof}
Deferred to Appendix \ref{sec:app proof}.
\end{proof}

\noindent
In the proof of Theorem~\ref{3recur} below we make use of the following identities, which can be verified inductively:
\begin{equation}\label{eq:k2^k}
\sum_{k=1}^n(n-k)2^k=2^{n+1}-2n-2,
\end{equation}
\begin{equation}\label{eq:k^22^k}
\sum_{k=1}^nk(n-k)2^{n-k-1}=4-3\cdot2^n+n\cdot2^n.
\end{equation}

\begin{theorem}\label{3recur}
$50C(n, n-1)+8C(n+1, n-1)+2C(n+4, n+1)=C(n+5, n+2)$ for $n\ge 1$.
\end{theorem}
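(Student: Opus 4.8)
The plan is to reduce everything to the single closed-form identity $C(n,n-1)=2^{n-1}$ established in Section~\ref{subsec:n-1}, since every term in the claimed recursion is of the form $C(N,N-1)$ for an appropriate $N$. Reading the indices carefully: $C(n,n-1)=2^{n-1}$, $C(n+1,n-1)=C(n+1,(n+1)-2)$ which is a codimension-two entry, while $C(n+4,n+1)=C(n+4,(n+4)-3)$ and $C(n+5,n+2)=C(n+5,(n+5)-3)$ are codimension-three entries. So I would first observe that the statement, despite its appearance, mixes one codimension-one term, one codimension-two term, and two codimension-three terms. Thus before touching Lemma~\ref{3diagstr} I would substitute the known value $C(n,n-1)=2^{n-1}$ and the known value $C(n+1,n-1)=(n+1)2^{n-1}$ from Theorem~\ref{2diag}, leaving only the two codimension-three quantities to analyze.

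The core of the argument is then to obtain a manageable expression for $C(m,m-3)$ from the unwieldy formula in Lemma~\ref{3diagstr}. First I would simplify each of the twelve summands of that lemma using the two auxiliary identities \eqref{eq:k2^k} and \eqref{eq:k^22^k} together with the elementary geometric-sum evaluations $\sum 2^{-m}$, $\sum m2^{-m}$, and the triple sums (which collapse because the innermost summand is independent of the inner index, producing factors like $(n-m-1)$ and $(n-m-i-1)$). My expectation is that after collecting terms the dust settles into a clean closed form for $C(m,m-3)$ of the shape $\alpha\, m^2 2^{m}+\beta\, m\,2^{m}+\gamma\,2^{m}+(\text{low-order correction})$ for explicit rational constants $\alpha,\beta,\gamma$. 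Getting this closed form is exactly the content foreshadowed by the remark that the generating function yields ``a much more compact closed form formula,'' so I would aim to extract it here.

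With a closed form $C(m,m-3)=f(m)$ in hand, the verification of the recursion becomes a finite symbolic computation: substitute $m=n+4$ and $m=n+5$ to get $C(n+4,n+1)=f(n+4)$ and $C(n+5,n+2)=f(n+5)$, plug in the elementary values for the first two terms, and check that
\[
50\cdot 2^{n-1}+8\,(n+1)2^{n-1}+2f(n+4)=f(n+5)
\]
holds identically in $n$. Since $f$ is (conjecturally) a quasi-polynomial times $2^{m}$, both sides are of the form (polynomial in $n$)$\cdot 2^{n}$ plus bounded correction terms, and matching the coefficients of $n^2 2^n$, $n\,2^n$, $2^n$, and the constant/low-order pieces reduces to a handful of scalar equalities. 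The factor $2$ multiplying $f(n+4)$ and the index shift by one in $f(n+5)$ should make the exponential parts line up automatically, with the numerical coefficients $50$ and $8$ precisely tuned to absorb the polynomial discrepancy.

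The main obstacle I anticipate is the bookkeeping in simplifying Lemma~\ref{3diagstr}: the formula has low-order boundary corrections (terms like $4(n-3)$, $2(n-2)$, $2^{n-1}$, $2^{n-3}$, and the stray $+4$) that do not fit the leading exponential pattern, and these are exactly the pieces where sign errors and off-by-one index slips creep in. I would therefore treat the leading $n^2 2^n$ and $n\,2^n$ behavior separately from the residual terms, verifying the recursion first at the level of the dominant exponential-polynomial part (where the identities \eqref{eq:k2^k} and \eqref{eq:k^22^k} do the heavy lifting) and only afterward confirming that the accumulated constant and linear corrections also cancel. A useful sanity check throughout is to evaluate $f(m)$ against the yellow-cell entries of Table~\ref{tab:main} (namely $f(4)=26$, $f(5)=80$, $f(6)=226$, $f(7)=600$, and so on) and to test the full recursion at small $n$ before asserting it for all $n\ge 1$; this guards against precisely the boundary-term mistakes that are the real hazard here.
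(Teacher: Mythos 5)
Your proposal is correct in substance and would prove the theorem, but it is organized differently from the paper's argument, and the comparison is instructive. The paper never extracts a closed form for $C(m,m-3)$ at this stage: it writes both codimension-three terms via Lemma~\ref{3diagstr} and computes the difference $C(n+5,n+2)-2C(n+4,n+1)$ \emph{first}, so that the doubling and the shift in $n$ align the factors $2^{n-m-c}$ and the worst piece --- the triple sum --- cancels almost entirely before any evaluation; only then are \eqref{eq:k2^k}, \eqref{eq:k^22^k}, and geometric sums applied to the modest residue, which collapses to a multiple of $2^{n-1}$ plus a multiple of $n2^{n-1}$ and is recognized as $50C(n,n-1)+8C(n+1,n-1)$. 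You instead evaluate Lemma~\ref{3diagstr} in full to get $f(m)=C(m,m-3)$ and then verify the recursion by substitution. That works --- indeed $f(m)=\left(\tfrac12 m^2+\tfrac{11}{4}m-\tfrac{25}{4}\right)2^{m-3}$ for $m\ge 5$, and a short check gives $f(n+5)-2f(n+4)=n2^{n+2}+29\cdot2^{n}=50\cdot2^{n-1}+8(n+1)2^{n-1}$ --- and it buys a stronger intermediate result, since this closed form is exactly what the paper only obtains afterwards from the generating function via Stanley's theorem. The costs are the ones you partly anticipate: after the triple sum collapses it carries a factor $\binom{n-m-1}{2}(m-3)$, a cubic weight in the summation index, so you need geometric-sum identities one degree beyond \eqref{eq:k2^k} and \eqref{eq:k^22^k}; and your closed form must be valid at the boundary case $m=5$ (the recursion at $n=1$ invokes $f(5)$), where several sums in Lemma~\ref{3diagstr} are empty --- your plan to test against the yellow table entries covers exactly this hazard. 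Finally, your reading of the terms is right and it matters: $C(n+1,n-1)$ is a codimension-two entry with value $(n+1)2^{n-1}$, not $n2^{n-1}$ as the paper's penultimate display loosely writes; with the correct value both routes close.
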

\begin{proof}
Using Lemma~\ref{3diagstr}, subtract $2C(n+4, n+1)$ from $C(n+5, n+2)$ to yield $$28n+24+12\sum_{m=4}^{n+2}2^{n-m+3}(m-3)+5\cdot 2^n+4\sum_{m=4}^{n+2}2^{n-m+3}+4\sum_{m=4}^{n+1}\sum_{i=1}^{n-m+2}2^{n-m+2}(m-3)+$$
$$+4\sum_{m=4}^{n+1}2^{n-m+2}(m-3)+2\sum_{m=3}^{n+2}2^{n-m+3}(m-2).$$
Using \eqref{eq:k2^k} and \eqref{eq:k^22^k} above along with the formula for the partial sum of a geometric series, this becomes 
$$28n+24-24+24\cdot 2^n-24n+5\cdot 2^n-8+4\cdot 2^n+16-4\cdot 2^{n+2}+8n+4n2^n+4\cdot 2^n+4\cdot2^n-8n-8+4\cdot2^{n+1}-4n$$
$$=50\cdot 2^{n-1}+8n2^{n-1}$$
$$=50C(n, n-1)+8C(n+1, n-1).$$
\end{proof}

\noindent
Using the recursive relation of Theorem \ref{3recur} and the formulas for $C(n, n-1)$ and $C(n+1, n-1)$, we get the following corollary. 

\begin{corollary}
For $n\ge 3,$
\begin{eqnarray}\label{codim 3}
\sum_{n>0}C(n, n-3)x^n=\frac{6x^3-10x^4-4x^5+10x^6-4x^7}{(1-2x)^3}.
\end{eqnarray}
\end{corollary}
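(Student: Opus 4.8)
The plan is to derive the generating function directly from the recursion in Theorem~\ref{3recur} together with the known generating functions for $C(n,n-1)$ and $C(n,n-2)$, rather than summing the unwieldy closed form of Lemma~\ref{3diagstr}. Let me set $F(x)=\sum_{n>0}C(n,n-3)x^n$, $A(x)=\sum_{n>0}C(n,n-1)x^n=\frac{x}{1-2x}$, and note from Theorem~\ref{2diag} that $C(n+1,n-1)=C(n+1,(n+1)-2)=(n+1)2^{n-1}$, whose generating function is recoverable from \eqref{codim 2}. The recursion $C(n+5,n+2)=2C(n+4,n+1)+8C(n+1,n-1)+50C(n,n-1)$ is, for each fixed offset, a linear relation among shifted coefficient sequences of $F$, $A$, and the $C(\cdot,\cdot-2)$ sequence, and each shift corresponds to multiplication by a power of $x$. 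So the strategy is to translate this coefficient recursion into a single functional equation in $x$ and solve algebraically for $F(x)$.

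First I would identify the three sequences appearing in the recursion as coefficient sequences. Observe that $C(n+5,n+2)=C(n+5,(n+5)-3)$ and $C(n+4,n+1)=C(n+4,(n+4)-3)$ are both $n-3$ type values (shifted), so they live in $F(x)$; meanwhile $C(n+1,n-1)$ is an $n-2$ type value and $C(n,n-1)$ is an $n-1$ type value. The next step is bookkeeping: multiply the recursion, valid for $n\ge 1$, by $x^{n}$ (or an appropriate power) and sum, carefully tracking the finitely many low-order correction terms that arise because the generating-function shifts $x^{-5}F$, $x^{-4}F$, etc., introduce the coefficients $C(1,-2),\dots,C(6,3)$ that are either zero or read directly off Table~\ref{tab:main}. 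Concretely, summing $\sum_{n\ge 1}C(n+5,n+2)x^{n+5}=x^5\bigl(F(x)-\text{(initial terms)}\bigr)$ and similarly for the $2C(n+4,n+1)$ term gives the left side, while the right side $8C(n+1,n-1)+50C(n,n-1)$ becomes a known rational expression plus polynomial corrections. Solving the resulting linear equation $F(x)-2xF(x)=(\text{known rational and polynomial data})$ isolates $F(x)$, and after clearing the $(1-2x)^3$ denominator the numerator should collapse to $6x^3-10x^4-4x^5+10x^6-4x^7$.

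The main obstacle I anticipate is the careful handling of the initial-condition corrections and the boundary of validity. Theorem~\ref{3recur} holds only for $n\ge 1$, and the corollary asserts the generating function for $n\ge 3$; the small cases $C(1,-2)$, $C(2,-1)$, $C(3,0)=6$, and the like must be reconciled against the actual table entries $C(3,0)=6$, $C(4,1)=26$, $C(5,2)=80$ so that the numerator polynomial comes out exactly right. In particular, since the recursion relates indices shifted by five, I expect the low-degree terms $6x^3-10x^4-4x^5$ to be pinned down by matching against Table~\ref{tab:main} directly, while the higher-order structure $+10x^6-4x^7$ and the pole of order three emerge from the recursion and from the $(1-2x)^{-1}$ and $(1-2x)^{-2}$ factors carried in by $A(x)$ and the $C(\cdot,\cdot-2)$ generating function. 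I would verify the final rational function by expanding it as a power series through $x^{10}$ and checking agreement with the yellow-cell column $6,26,80,226,600,1528,3776,9120$ of the table; passing this check confirms the partial-fraction arithmetic that constitutes the bulk of the routine computation.
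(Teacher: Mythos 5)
Your proposal is correct and is essentially the paper's own argument: the paper likewise obtains the generating function by feeding the recursion of Theorem~\ref{3recur} the known data $C(n,n-1)=2^{n-1}$ and $C(n+1,n-1)=(n+1)2^{n-1}$ (equivalently, the rational functions \eqref{codim 1} and \eqref{codim 2}) and solving the resulting functional equation, with the initial values $C(3,0)=6$, $C(4,1)=26$, $C(5,2)=80$ supplying the low-degree terms of the numerator. Your bookkeeping plan does resolve correctly: carrying it out gives $F(x)(1-2x)=6x^3+14x^4+28x^5+\frac{58x^6}{1-2x}+\frac{8x^6}{(1-2x)^2}$, whose numerator over $(1-2x)^3$ collapses to exactly $6x^3-10x^4-4x^5+10x^6-4x^7$.
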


\noindent
Now, using Theorem 4.1.1. in \textbf{\cite{S1}}, we can extract the following interesting theorem.

\begin{theorem}
For $n\ge 3$, 
\[C(n, n-3) =  \begin{cases} 
      (7n-15)2^{n-3} & 3\le n\le 5 \\
      (\frac{1}{2}n^2+\frac{11}{4}n-\frac{25}{4})2^{n-3} & n\ge5.
   \end{cases}
\]
\end{theorem}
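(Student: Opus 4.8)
The plan is to extract the closed-form formula for $C(n,n-3)$ directly from the generating function
\[
F(x)=\sum_{n>0}C(n,n-3)x^n=\frac{6x^3-10x^4-4x^5+10x^6-4x^7}{(1-2x)^3}
\]
established in the preceding corollary. The key structural fact is that a rational function whose denominator is $(1-2x)^3$ has coefficients that, for $n$ past the degree of the numerator, agree with a quasi-polynomial of the form $q(n)2^n$ where $\deg q\le 2$; this is precisely the content of Theorem 4.1.1 in \textbf{\cite{S1}} (the standard result that a generating function $\frac{p(x)}{(1-\alpha x)^d}$ has coefficients eventually equal to a degree-$(d-1)$ polynomial in $n$ times $\alpha^n$). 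So the first step is to invoke that theorem to guarantee the \emph{shape} of the answer: for all sufficiently large $n$, $C(n,n-3)=(An^2+Bn+D)2^{n-3}$ for constants $A,B,D$ to be determined.

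The second step is to pin down the constants $A,B,D$. Since the numerator has degree $7$ and the denominator degree $3$, the partial-fraction/polynomial-correction analysis tells us exactly when the pure quasi-polynomial formula takes over; I would expect the threshold to be around $n\ge 5$, which matches the two-case split in the statement. Concretely, I would expand $F(x)$ in partial fractions as $\frac{c_1}{1-2x}+\frac{c_2}{(1-2x)^2}+\frac{c_3}{(1-2x)^3}+(\text{polynomial part})$, read off that $[x^n]\frac{1}{(1-2x)^j}=\binom{n+j-1}{j-1}2^n$, and assemble the resulting coefficient into the form $q(n)2^{n-3}$. Collecting the $\binom{n+2}{2}$, $\binom{n+1}{1}$, and constant contributions will produce the quadratic $\tfrac12 n^2+\tfrac{11}{4}n-\tfrac{25}{4}$; matching against known small table values then fixes everything unambiguously.

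The third step is to handle the small cases $3\le n\le 5$ separately, where the polynomial part of the partial-fraction decomposition still contributes. Here I would simply compare the quasi-polynomial $(\tfrac12 n^2+\tfrac{11}{4}n-\tfrac{25}{4})2^{n-3}$ against the genuine coefficients $C(3,0)=6$, $C(4,1)=26$, $C(5,2)=80$ (read from Table \ref{tab:main}) to verify that the alternate formula $(7n-15)2^{n-3}$ gives $6,26,80$ for $n=3,4,5$ while the quadratic formula agrees at $n=5$ (both give $80$), confirming the boundary is consistent. This is the routine verification that stitches the two branches together at $n=5$.

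The main obstacle I anticipate is purely bookkeeping rather than conceptual: correctly carrying out the partial-fraction decomposition of a degree-$7$-over-degree-$3$ rational function, including the polynomial quotient, and then translating the binomial-coefficient expressions into a clean quadratic in $n$. A secondary subtlety is ensuring the threshold for validity of the quasi-polynomial is stated correctly; the numerator degree exceeding the denominator degree means one must be careful about exactly which small $n$ require the separate linear formula, and I would double-check the crossover at $n=5$ explicitly rather than rely on the general degree bound.
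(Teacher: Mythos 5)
Your proposal is correct and follows essentially the same route as the paper, which likewise obtains this theorem by applying Theorem 4.1.1 of \textbf{\cite{S1}} to the generating function $\frac{6x^3-10x^4-4x^5+10x^6-4x^7}{(1-2x)^3}$ from the preceding corollary; your partial-fraction extraction, the $n\ge 5$ threshold coming from the degree-$4$ polynomial part, and the check of the small cases against Table \ref{tab:main} are exactly the bookkeeping the paper leaves implicit.
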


\section{More Formulas}

In counting ordered pairs of compositions corresponding to seaweeds with a certain index, it is worth noting that, as with Duflo, we are not enumerating
the number of such seaweeds up to conjugation.  However, the parabolic case 
is quite different from the biparabolic case.  In the parabolic case, the  conjugacy classes of parabolic subalgebras of $\mf{sl}(n)$ are in one-to-one correspondence with compositions of $n$.  Leveraging the formulas in  Section 4.1, we can enumerate the number of conjugate parabolics precisely.

\subsection{Linear GCD Formulas for the Index}

The discrete combinatorial formula of Dergachev and Kirillov given in Theorem \ref{thm:dk}, while elegant, is difficult to apply in practice. However, in certain cases, the following index formulas allow us to ascertain the index directly from the block sizes of the flags that define the seaweed. 

The following formulas were developed in a series of articles \textbf{\cite{Coll1,Coll2, Coll3, Collar}}. The first formula for maximal parabolics (\ref{2parts}) was known in its essential form to Elashvili as early as 1990 (see (\textbf{\cite{Elash}}), but, together with the introduction of the latter formula (\ref{3parts}), was reestablished using different methods by Coll et al in 2015 (see \textbf{\cite{Coll2}}, cf., \textbf{\cite{dk}}.) 

\begin{theorem}[Coll et al, \textbf{\cite{Coll2}}]\label{2parts}
A seaweed of type $\dfrac{a|b}{n}$ has index $\gcd (a,b)-1.$
\end{theorem}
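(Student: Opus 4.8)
The plan is to read the index directly off the meander $M^\A_n(a\dd b)$ by exploiting the fact that, after identifying the vertex set with $\mathbb{Z}/n\mathbb{Z}$, its two reflections become affine maps modulo $n$. First I would set $n=a+b$ and record the top and bottom edge rules as two involutions: the bottom block of size $n$ pairs $i$ with $n+1-i$, i.e.\ $\tau(i)=1-i$, while the two top blocks pair $i$ with $a+1-i$ inside $[1,a]$ and with $a+n+1-i$ inside $[a+1,n]$. The first verification step is that both top rules collapse to the single formula $\sigma(i)=a+1-i \pmod n$; this is the small but crucial check that the piecewise block reflection agrees with one clean residue map. Granting this, $r:=\tau\sigma$ is the rotation $i\mapsto i+b \pmod n$, hence has exactly $g:=\gcd(b,n)=\gcd(a,b)$ orbits, each of length $n/g$.

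Next I would pass to the induced action on the set of $r$-orbits, which are indexed by residues modulo $g$ since $b\equiv 0 \pmod g$. Because $a\equiv 0 \pmod g$, both $\sigma$ and $\tau$ descend to the \emph{same} involution $r\mapsto 1-r$ on $\mathbb{Z}/g\mathbb{Z}$. Consequently the connected components of the meander are governed by this one involution: each pair $\{O_r,O_{1-r}\}$ of distinct $r$-orbits fuses into a single component, and, when $g$ is odd, the self-paired orbit $O_{r_\ast}$ with $2r_\ast\equiv 1\pmod g$ forms a component by itself. Counting components therefore reduces to counting fixed points and $2$-cycles of $r\mapsto 1-r$ on $\mathbb{Z}/g\mathbb{Z}$.

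To finish I would classify each component as a cycle or a path according to whether it meets a fixed point of $\sigma$ or $\tau$, i.e.\ a vertex missing a top or bottom edge. Solving $2i\equiv a+1$ and $2i\equiv 1 \pmod n$ shows every such fixed point lies in the residue class $r_\ast$, and that fixed points exist precisely when $g$ is odd (equivalently, when $a,b$ are not both even). Thus when $g$ is odd there is exactly one path, namely $O_{r_\ast}$ carrying its two degree-one endpoints, together with $(g-1)/2$ cycles; when $g$ is even there are no paths and $g/2$ cycles. In either case Theorem~\ref{thm:dk} gives $\ind = 2C+P-1 = g-1$, as claimed.

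The main obstacle I anticipate is the bookkeeping in the last paragraph rather than any deep idea: one must confirm that the self-paired orbit contains \emph{exactly} two fixed points (so it is a single path with two endpoints, not several paths) and that no vertex is fixed by both reflections (so there are no isolated points), which requires checking the parity cases for $(a,b)$. The reduction of the piecewise top reflection to the affine map $\sigma(i)=a+1-i \pmod n$ is the other delicate point, since it is what turns the whole argument into a single computation in $\mathbb{Z}/n\mathbb{Z}$. As an alternative route, one could instead wind the meander down with the signature moves of Appendix~\ref{sec:app signature}, mirroring the subtractive Euclidean algorithm on $(a,b)$ until reaching $\gcd(a,b)$ equal blocks; that approach is closer to the paper's own methods but is less transparent for tracking the exact cycle and path counts.
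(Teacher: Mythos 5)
Your proof is correct, and it is worth noting that the paper itself does not actually prove Theorem \ref{2parts}: it quotes the result from Coll et al (in essential form going back to Elashvili), and the only proof machinery available inside the paper --- the winding-down moves of Appendix \ref{sec:app signature} --- would produce the inductive, Euclidean-algorithm-style argument you sketch as an alternative in your last paragraph. So your route is genuinely different and self-contained. I checked the delicate points you flagged and they all go through: on the second block the pairing rule is $i \mapsto 2a+b+1-i$, which is congruent to $a+1-i$ modulo $n=a+b$, so the top reflection really is the single affine involution $\sigma(i)=a+1-i$; the composite $\tau\sigma$ is rotation by $b$, whose $g=\gcd(b,n)=\gcd(a,b)$ orbits are exactly the residue classes mod $g$; both reflections descend to $u\mapsto 1-u$ on $\mathbb{Z}/g\mathbb{Z}$ because $g$ divides $a$; and the parity bookkeeping works out --- when $a,b$ are both odd there are exactly two top-edge-free vertices and no bottom-edge-free vertex, when $a,b$ have opposite parity there is exactly one of each, and in all cases these satisfy $2i\equiv 1 \pmod g$, hence lie in the unique self-paired orbit, while a vertex fixed by both reflections would force $a\equiv 0 \pmod n$, which is impossible for $1\le a\le n-1$. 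Thus for $g$ odd you get exactly one path and $(g-1)/2$ cycles, and for $g$ even (both $a,b$ even) no paths and $g/2$ cycles, so Theorem \ref{thm:dk} gives index $g-1$ in both cases. What your approach buys is an explicit census of the meander's components, not just the index: the winding-down proof is shorter and matches the paper's toolkit (and adapts directly to the companion formula for $\frac{a|b|c}{n}$), but it does not keep track of which components are cycles versus paths, whereas your dihedral-action computation identifies them exactly and explains structurally why the answer is $\gcd(a,b)-1$.
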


\begin{theorem}[Coll et al,\textbf{ \cite{Coll2}}]\label{3parts}
A seaweed of type $\dfrac{a|b|c}{n}$, or type $\dfrac{a|b}{c|n-c}$, has index $\gcd (a+b,b+c)-1$.
\end{theorem}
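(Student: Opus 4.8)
The final statement is Theorem \ref{3parts}, which asserts that a seaweed of type $\dfrac{a|b|c}{n}$ (equivalently $\dfrac{a|b}{c|n-c}$) has index $\gcd(a+b,b+c)-1$. The plan is to compute the index via Theorem \ref{thm:dk} by analyzing the connected components of the associated meander $M$. The meander has $n$ vertices and top edges determined by the three blocks of sizes $a,b,c$ (nested arcs within each block), while the single bottom block of size $n$ contributes one family of nested arcs spanning all $n$ vertices. Since every component is a cycle, path, or isolated point, and the number of paths here will turn out to be exactly one, the index reduces to $2C + 1 - 1 = 2C$, so the entire task is to count the cycles $C$ and show $2C = \gcd(a+b,b+c)-1$; this requires showing $\gcd(a+b,b+c)$ is odd and $C = (\gcd(a+b,b+c)-1)/2$.

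First I would set up coordinates: the top arcs pair vertices symmetrically within $[1,a]$, $[a+1,a+b]$, and $[a+b+1,n]$, while the bottom arcs pair $v_j$ with $v_{n+1-j}$ globally. The key step is to track how a vertex moves under alternately applying the ``top partner'' and ``bottom partner'' involutions, which generates a permutation whose orbits are precisely the components of the meander. I would identify this dynamical system with a concrete arithmetic map — reflections on a line of $n$ points — and recognize that composing the two reflections produces a translation-like map whose orbit structure is governed by a greatest common divisor. The cleanest route is to invoke or reconstruct the known reduction (from \textbf{\cite{dk}} and \textbf{\cite{Coll2}}) that a three-block-over-one meander is equivalent, after a winding-down or folding argument, to a ``linking'' meander on $a+b$ and $b+c$ vertices, whose cycle count is controlled by $\gcd(a+b,b+c)$.

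A more structural approach I would actually prefer is to use the signature/contraction moves of Appendix \ref{sec:app signature} together with Theorem \ref{homotopy type}: apply index-preserving reductions to wind $M$ down, and show each reduction step performs a subtraction mirroring the Euclidean algorithm on the pair $(a+b, b+c)$. Concretely, I would prove that the meander for $\dfrac{a|b|c}{n}$ is index-equivalent to one for a strictly smaller seaweed in which the pair $(a+b,b+c)$ is replaced by $(\min, |a+b-(b+c)|) = (\min, |a-c|)$ or similar, reflecting one Euclidean step. Iterating terminates when one of the two quantities vanishes, leaving a configuration whose index is directly readable as $g-1$ where $g = \gcd(a+b,b+c)$. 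The equivalence of the two stated types, $\dfrac{a|b|c}{n}$ and $\dfrac{a|b}{c|n-c}$, follows because flipping the meander vertically and relabeling interchanges top and bottom roles while preserving the multiset of relevant sums; I would verify that both yield the same underlying graph up to the symmetry that Theorem \ref{thm:dk} is blind to.

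The main obstacle will be making the Euclidean-descent rigorous at the level of the meander graph: I must show that a single folding move applied to the outermost bottom arc exactly realizes subtracting the smaller of $a+b$, $b+c$ from the larger, while preserving $C$ and $P$ up to the predicted change, and that no cycle is spuriously created or destroyed. Handling the boundary cases — when a block is emptied, when $a+b = b+c$, and when the final remainder forces the single surviving path — will require careful bookkeeping, and the parity claim (that $\gcd(a+b,b+c)$ is odd, so that $2C+P-1$ lands on an integer of the right form) must be extracted from the fact that the total vertex count and the arc-pairing structure force exactly one odd-length path. I would isolate this parity argument as a short lemma before running the descent.
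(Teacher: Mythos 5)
Theorem \ref{3parts} is not proved in this paper at all: it is quoted from Coll et al \cite{Coll2}, so there is no internal proof to compare your proposal against. Judged on its own terms, your plan (compute the index via Theorem \ref{thm:dk}, model the meander components as orbits of two reflections, and realize a Euclidean descent through the winding-down moves of Appendix \ref{sec:app signature}) is the right general strategy, and it is essentially how the cited literature proceeds.

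However, your proposal contains a concrete false step that would sink it: you assert that the number of paths $P$ will be exactly one, so that the index equals $2C$, and you plan to prove as a lemma that $\gcd(a+b,b+c)$ is odd. Neither claim is true. For the seaweed $\frac{1|1|1}{3}$ one has $\gcd(a+b,b+c)=\gcd(2,2)=2$, which is even; its meander has no top edges and bottom edges pairing $v_1$ with $v_3$, leaving $v_2$ isolated, so $C=0$ and $P=2$, giving index $2\cdot 0+2-1=1=\gcd(2,2)-1$. For $\frac{2|2|2}{6}$ one has $\gcd(4,4)=4$, and the meander decomposes into two cycles and no paths, giving index $2\cdot 2+0-1=3$. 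The correct structural fact your descent must preserve is the relation $2C+P=\gcd(a+b,b+c)$, in which both $C$ and $P$ vary freely (here $P$ can be $0$, $1$, or $2$); there is no parity constraint on the gcd, and no single distinguished path. Your proposed parity lemma is therefore unprovable, and the bookkeeping in the Euclidean-descent argument must instead track the combined quantity $2C+P$ through each move rather than assuming $P=1$ throughout.
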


In response to a conjecture of the first author and Magnant in \textbf{\cite{Collar}}, the following recent result establishes that the formulas in Theorems \ref{2parts} and \ref{3parts} are the only nontrivial linear ones that are available in the parabolic case.  

\begin{theorem}[Theorem 5.3,\textbf{ \cite{Kar}}]  If $m\geq 4$ and   
$\mf{p}$ is a seaweed of type $\dfrac{a_1|a_2|\cdots|a_m}{n}$, then there do not exist homogeneous polynomials $f_1,f_2 \in \Z[x_1,...,x_m]$ of arbitrary degree, such that the index of $\mf{p}$ is given by 
$\gcd (f_1(a_1,...,a_m),f_2(a_1,...,a_m))$.

\end{theorem}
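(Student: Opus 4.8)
The plan is to show that no pair of homogeneous polynomials $f_1,f_2\in\Z[x_1,\dots,x_m]$ can satisfy $\ind\mathfrak{p}=\gcd(f_1(a_1,\dots,a_m),f_2(a_1,\dots,a_m))$ for all compositions $(a_1,\dots,a_m)$ of $n$ parametrizing a parabolic of type $\frac{a_1|\cdots|a_m}{n}$, once $m\ge 4$. The natural line of attack is to assume such $f_1,f_2$ exist and derive a contradiction by evaluating at carefully chosen families of block sizes for which the true index is computable by other means (e.g.\ via the meander component count of Theorem~\ref{thm:dk}, or the winding-down moves), and comparing the arithmetic behavior of the true index against that forced by a gcd of homogeneous polynomials. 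First I would record two structural constraints that any candidate gcd formula must respect: homogeneity of $f_1,f_2$ forces a rigid scaling relation $\gcd(f_1(\lambda a),f_2(\lambda a))$ as $\lambda$ grows, while the index of a parabolic is only quasi-invariant under scaling in very restricted ways. The contradiction should come from a mismatch between these scaling behaviors.

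\textbf{Key steps in order.}
First I would compute $\ind\mathfrak{p}$ along a one-parameter or two-parameter subfamily of compositions where the meander structure is transparent, producing an explicit arithmetic function of the parameters that the putative $\gcd(f_1,f_2)$ must reproduce. Second, I would exploit homogeneity: if $d_1=\deg f_1$ and $d_2=\deg f_2$, then along a ray $a=\lambda a^{(0)}$ (with $\lambda$ a positive integer) each $f_i(\lambda a^{(0)})=\lambda^{d_i}f_i(a^{(0)})$, so $\gcd(f_1,f_2)$ grows in a controlled, multiplicative-in-$\lambda$ way, whereas the actual index along such a ray (computed in step one) typically does not. Third, I would select at least two distinct rays whose index behavior is incompatible with a single homogeneous-degree pair $(d_1,d_2)$, thereby ruling out every choice of degrees simultaneously. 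Because the statement allows \emph{arbitrary} degree, the argument must handle all $(d_1,d_2)$ uniformly, so I expect the decisive test families to be ones where the index is eventually constant or grows sub-polynomially along a ray while $\gcd(f_1,f_2)$ would be forced to grow like $\lambda^{\min(d_1,d_2)}$ unless one $f_i$ vanishes identically on that ray.

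\textbf{The main obstacle.}
The hard part will be the quantifier over \emph{all} homogeneous $f_1,f_2$ of \emph{arbitrary} degree: a single counterexample family kills one degree pair, but a genuine impossibility proof must defeat every pair at once. The key technical device I would develop is a finite collection of composition families $\{a^{(t)}(\lambda)\}$ whose indices, as functions of $\lambda$, are mutually inconsistent with the hypothesis that $\gcd(f_1,f_2)$ be homogeneous of any fixed bidegree---for instance by arranging that on one family the index is bounded while on another it is unbounded, or by forcing a prime $p$ to divide the index on one family but divide neither $f_i$ generically. Since the cited source is Theorem~5.3 of \cite{Kar}, I would expect their proof to isolate exactly such an obstruction using the $m\ge 4$ hypothesis (note the formula of Theorem~\ref{3parts} shows $m\le 3$ genuinely admits a gcd expression, so the argument must crucially fail for $m\le 3$ and succeed for $m\ge 4$). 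Pinning down the minimal family that simultaneously exploits $m\ge 4$ and the arbitrary-degree clause is where the real work lies; the surrounding scaling and divisibility bookkeeping is routine once that family is in hand.
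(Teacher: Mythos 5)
You should note at the outset that the paper does not prove this statement at all: it is quoted verbatim as Theorem 5.3 of the cited work of Karnauhova and Liebscher \textbf{\cite{Kar}}, and it appears in the paper only to certify that the formulas of Theorems \ref{2parts} and \ref{3parts} are the only linear gcd index formulas in the parabolic case. So there is no internal proof to compare against; your attempt can only be judged on its own merits, and on those merits it is a strategy sketch rather than a proof. You assume $f_1,f_2$ exist, propose to evaluate along parametrized families of compositions, and propose to derive a contradiction from scaling behavior --- but you never exhibit a single concrete family, never compute the index of any meander, and you explicitly concede that ``pinning down the minimal family\dots is where the real work lies.'' Since the entire content of the theorem is exactly that family-plus-arithmetic argument, what you have written defers the whole proof.

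There is also a substantive flaw in the one mechanism you do commit to. Your decisive test is supposed to be a ray $a=\lambda a^{(0)}$ along which the true index is ``eventually constant or grows sub-polynomially'' while $\gcd(f_1(\lambda a^{(0)}),f_2(\lambda a^{(0)}))$ grows like $\lambda^{\min(d_1,d_2)}$. But the index of these seaweeds grows essentially linearly along rays: all of the winding-down moves in Appendix \ref{sec:app signature} are linear in the block sizes, and in the cases where closed formulas exist one sees this directly --- for type $\frac{\lambda a|\lambda b}{\lambda n}$ the index is $\lambda\gcd(a,b)-1$, so $\mathrm{ind}+1$ is itself homogeneous of degree one. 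Consequently scaling can at best rule out degree pairs with $\min(d_1,d_2)\geq 2$; it is powerless against pairs of linear forms, which is precisely the hardest case (the Coll--Magnant conjecture that motivated \textbf{\cite{Kar}} concerned linear formulas). Defeating the degree-one case requires a genuinely different obstruction --- in Karnauhova--Liebscher's argument, explicit four-block families whose index, computed via the Euclidean-algorithm-like behavior of the reduction moves, takes values incompatible with any $\gcd(f_1,f_2)$ --- and nothing in your outline supplies it. Note also that $m\geq 4$ enters your sketch only as a remark that it must matter somewhere; a correct proof has to use it, since by Theorem \ref{3parts} the claim is false for $m\leq 3$.
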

\subsection{$C_{2,1}(n,k)$}\label{subsec:maximal}

In this section, we are concerned with computing the number of conjugacy classes of maximal parabolic subalgebras of $\mf{sl}(n)$ with index $k$, which we denote $C_{2,1}(n,k)$. The conjugacy classes of such subalgebras are in one-to-one correspondence with seaweeds of the form $ \frac{a|b}{n}$. The subscripts in our notation for $C_{2,1}(n,k)$ are suggestive of the number of parts in the top and bottom compositions of the conjugacy class representatives that we will be counting.

The result of this section is an immediate consequence of Theorem \ref{2parts}.

\begin{theorem}\label{com21} If $n=(k+1)t$ for some integer $t$,
then $C_{2,1}(n, k)=\varphi(t)$, where $\varphi$ is Euler's totient function.
Otherwise, $C_{2,1}(n, k)=0$.
\end{theorem}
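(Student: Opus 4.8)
\textbf{Proof proposal for Theorem \ref{com21}.}

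The plan is to reduce the entire statement to a direct counting argument built on Theorem \ref{2parts}. By the stated correspondence, the conjugacy classes of maximal parabolic subalgebras of $\mf{sl}(n)$ with index $k$ are in bijection with seaweeds of type $\frac{a|b}{n}$, which in turn correspond to the ways of writing $n=a+b$ with $a,b\ge 1$. Since the type $\frac{a|b}{n}$ and the type $\frac{b|a}{n}$ represent the \emph{same} conjugacy class (reversing the order of the two parts does not change the seaweed up to conjugation), I would first fix a convention—say $a\le b$—so that each conjugacy class is counted exactly once. Thus $C_{2,1}(n,k)$ equals the number of unordered pairs $\{a,b\}$ with $a+b=n$ and, by Theorem \ref{2parts}, $\gcd(a,b)-1=k$, i.e. $\gcd(a,b)=k+1$.

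Next I would translate the gcd condition into a divisibility-and-coprimality statement. Setting $d=k+1$, the requirement $\gcd(a,b)=d$ forces $d\mid a$ and $d\mid b$, hence $d\mid (a+b)=n$; this immediately yields the dichotomy in the theorem, since if $d\nmid n$ there are no valid pairs and $C_{2,1}(n,k)=0$. When $d\mid n$, write $n=dt$, $a=da'$, $b=db'$ with $a'+b'=t$ and $\gcd(a',b')=1$ (the gcd condition is equivalent to $a',b'$ being coprime after factoring out $d$). So the count becomes the number of pairs of positive integers $(a',b')$ with $a'+b'=t$ and $\gcd(a',b')=1$.

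The key step is then the classical totient identity: the number of \emph{ordered} pairs $(a',b')$ with $a'+b'=t$, $a',b'\ge 1$, and $\gcd(a',b')=1$ equals $\varphi(t)$. This follows because $\gcd(a',t-a')=\gcd(a',t)$, so coprimality of the pair is equivalent to $a'$ being coprime to $t$ with $1\le a'\le t-1$, and there are exactly $\varphi(t)$ such residues $a'$ for $t\ge 2$ (and $\varphi(1)=1$ handles the degenerate edge case appropriately). I would need to reconcile this ordered count $\varphi(t)$ against the unordered convention chosen at the start; the cleanest route is to observe that counting conjugacy classes corresponds to the ordered count here, because distinct values of $a'$ coprime to $t$ give genuinely distinct seaweed types $\frac{a|b}{n}$ and the correspondence with conjugacy classes is stated directly in terms of these types.

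The main obstacle I anticipate is the bookkeeping of whether one counts ordered or unordered pairs, and ensuring the final tally is exactly $\varphi(t)$ rather than $\varphi(t)/2$ or $\varphi(t)+1$. Resolving this hinges on the precise reading of the bijection ``conjugacy classes $\leftrightarrow$ seaweeds of the form $\frac{a|b}{n}$'': if that correspondence treats $\frac{a|b}{n}$ and $\frac{b|a}{n}$ as the same class, then one must show the $\varphi(t)$ coprime residues pair up into $\varphi(t)/2$ symmetric pairs plus handling of the self-paired case, which would contradict the stated answer—so the correct interpretation must be that the correspondence is with \emph{ordered} types, giving $\varphi(t)$ directly. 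I would therefore settle this interpretive point up front, cite the one-to-one correspondence and Theorem \ref{2parts} as the two inputs, and present the remainder as the short number-theoretic computation above.
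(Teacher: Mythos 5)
Your number-theoretic core is exactly the paper's argument: by Theorem \ref{2parts} the index of $\frac{a|b}{n}$ is $\gcd(a,b)-1$, and since $\gcd(a,b)=\gcd(a,a+b)=\gcd(a,n)$, one counts the residues $a$ with $1\le a\le n-1$ and $\gcd(a,n)=k+1$; divisibility forces $(k+1)\mid n$ (the dichotomy), and writing $n=(k+1)t$, $a=(k+1)s$ with $1\le s\le t-1$ and $\gcd(s,t)=1$ gives $\varphi(t)$. The paper does precisely this, taking the ordered count over the $n-1$ types $\frac{a|b}{n}$ as what is being counted.

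The genuine flaw is in your handling of the ordered-versus-unordered question, which is the very step that decides between $\varphi(t)$ and (roughly) $\varphi(t)/2$. First, your opening claim that $\frac{a|b}{n}$ and $\frac{b|a}{n}$ ``represent the same conjugacy class'' is false: for $a\neq b$ these are standard parabolics of different types (they omit the simple roots $\alpha_a$ and $\alpha_b$, respectively), and parabolic subalgebras of different types are never conjugate -- they are exchanged by the transpose map, which is not inner for $\mathfrak{sl}(n)$ with $n\ge 3$. This Lie-theoretic fact is exactly what makes the paper's one-to-one correspondence between conjugacy classes and \emph{ordered} types $\frac{a|b}{n}$ correct; one can also see it in Table \ref{tab:maxpara}, e.g.\ $C_{2,1}(12,1)=2=\varphi(6)$ counts $\frac{2|10}{12}$ and $\frac{10|2}{12}$ separately. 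Second, having started from the false unordered premise, you resolve the tension by declaring that the ordered reading ``must'' be right because the unordered one ``would contradict the stated answer.'' In a blind proof this is circular: you cannot appeal to the truth of the theorem being proved to fix the meaning of the quantity being counted. The repair is simply to establish (or cite) the non-conjugacy fact above at the outset; with the ordered interpretation so justified, the remainder of your computation is correct and coincides with the paper's.
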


\begin{proof}
For a given positive integer $n$, there are $n-1$ seaweeds of the form $\frac{a|b}{n}$. The index of such a seaweed is equal to $\gcd(a,b)-1$ (see \cite{dk}). Since $\gcd(a,b)=\gcd(a,a+b)$, our goal is to find the cardinality of the following set
\[\{a\in\mathbb{Z} \mid 1\leq a\leq n-1 \text{ and }\gcd(a,n)=k+1\}.\]
Thus, $n=(k+1)t$ for some integer $t$, otherwise $C_{2,1}(n, k)=0$. Since $k+1$ must also divide $a$, we can express each element of this set as $a=(k+1)s$ for some integer $s$. To ensure that $\gcd(a,n)=k+1$, $s$ must be relatively prime to $t$, and furthermore $s$ must be less than $t$ to ensure that $a\leq n-1$. It follows that $C_{2,1}(n, k)=\varphi(t)$.
\end{proof}
\bigskip
\noindent
\textbf{Example: } Consider the following Table which encodes values of $C_{2,1}(t+tk,k)$, for various values of $t$.  More specifically, $C_{2,1}(2+2k, k)$ are colored blue, $C_{2,1}(3+3k, k)$ are colored red, $C_{2,1}(4+4k, k)$ are colored gray, and $C_{2,1}(5+5k, k)$ are colored green.

\begin{table}[H]
\centering
\begin{tabular}{|c|c|c|c|c|c|c|c|c|c|c|c|c|}
\hline
\textbf{n\textbackslash k} & \textbf{0}  & \textbf{1} & \textbf{2} & \textbf{3} & \textbf{4} & \textbf{5} & \textbf{6} & \textbf{7} & \textbf{8} & \textbf{9} & \textbf{10} & \textbf{11} \\ \hline
\textbf{2}          & \cellcolor{blue!25}1  & 0 & 0 & 0 & 0 & 0 & 0 & 0 & 0 & 0 & 0  & 0  \\ \hline
\textbf{3}          & \cellcolor{red!25}2  & 0 & 0 & 0 & 0 & 0 & 0 & 0 & 0 & 0 & 0  & 0  \\ \hline
\textbf{4}          & \cellcolor{yellow!25}2  & \cellcolor{blue!25}1 & 0 & 0 & 0 & 0 & 0 & 0 & 0 & 0 & 0  & 0  \\ \hline
\textbf{5}          & \cellcolor{gray!25}4  & 0 & 0 & 0 & 0 & 0 & 0 & 0 & 0 & 0 & 0  & 0  \\ \hline
\textbf{6}          & \cellcolor{green!25}2  & \cellcolor{red!25}2 & \cellcolor{blue!25}1 & 0 & 0 & 0 & 0 & 0 & 0 & 0 & 0  & 0  \\ \hline
\textbf{7}          & 6  & 0 & 0 & 0 & 0 & 0 & 0 & 0 & 0 & 0 & 0  & 0  \\ \hline
\textbf{8}          & 4  & \cellcolor{yellow!25}2 & 0 & \cellcolor{blue!25}1 & 0 & 0 & 0 & 0 & 0 & 0 & 0  & 0  \\ \hline
\textbf{9}          & 6  & 0 & \cellcolor{red!25}2 & 0 & 0 & 0 & 0 & 0 & 0 & 0 & 0  & 0  \\ \hline
\textbf{10}         & 4  & \cellcolor{gray!25}4 & 0 & 0 & \cellcolor{blue!25}1 & 0 & 0 & 0 & 0 & 0 & 0  & 0  \\ \hline
\textbf{11}         & 10 & 0 & 0 & 0 & 0 & 0 & 0 & 0 & 0 & 0 & 0  & 0  \\ \hline
\textbf{12}         & 4  & \cellcolor{green!25}2 & \cellcolor{yellow!25}2 & \cellcolor{red!25}2 & 0 & \cellcolor{blue!25}1 & 0 & 0 & 0 & 0 & 0  & 0  \\ \hline
\end{tabular}
\caption{$C_{2,1}(n, k)$}
\label{tab:maxpara}
\end{table}

\bigskip
\noindent
\textit{Remark:}  Although we have presented the central theorem of this section first, and the table examples later, it was the empirical data of Table 2 that provided clues to special cases of the general theorem from which they now follow as a consequence.

\subsection{$C_{2,2}(n,k)$}

In this section a formula is determined which characterizes Table~\ref{22table} below which enumerates $C_{2,2}(n, k)$.

\begin{table}[H]
\centering
\begin{tabular}{|c|c|c|c|c|c|c|c|c|c|c|c|}
\hline
\textbf{Dim\textbackslash Index} & \textbf{0}  & \textbf{1}  & \textbf{2}  & \textbf{3} & \textbf{4} & \textbf{5} & \textbf{6} & \textbf{7} & \textbf{8} & \textbf{9} & \textbf{10} \\ \hline
\textbf{2}        & \cellcolor{red!25}0  & \cellcolor{blue!25}1  & 0  & 0 & 0 & 0 & 0 & 0 & 0 & 0 & 0  \\ \hline
\textbf{3}        & \cellcolor{yellow!25}2  & 0  & \cellcolor{blue!25}2  & 0 & 0 & 0 & 0 & 0 & 0 & 0 & 0  \\ \hline
\textbf{4}        & \cellcolor{green!25}4  & \cellcolor{red!25}2  & 0  & \cellcolor{blue!25}3 & 0 & 0 & 0 & 0 & 0 & 0 & 0  \\ \hline
\textbf{5}        & \cellcolor{gray!25}12 & 0  & 0  & 0 & \cellcolor{blue!25}4 & 0 & 0 & 0 & 0 & 0 & 0  \\ \hline
\textbf{6}        & 8  & \cellcolor{yellow!25}8  & \cellcolor{red!25}4  & 0 & 0 & \cellcolor{blue!25}5 & 0 & 0 & 0 & 0 & 0  \\ \hline
\textbf{7}       & 30  & 0  & 0  & 0 & 0 & 0 & \cellcolor{blue!25}6 & 0 & 0 & 0 & 0  \\ \hline
\textbf{8}        & 24 & \cellcolor{green!25}12 & 0  & \cellcolor{red!25}6 & 0 & 0 & 0 & \cellcolor{blue!25}7 & 0 & 0 & 0  \\ \hline
\textbf{9}        & 42 & 0  & \cellcolor{yellow!25}14 & 0 & 0 & 0 & 0 & 0 & \cellcolor{blue!25}8 & 0 & 0  \\ \hline
\textbf{10}       & 32 & \cellcolor{gray!25}32 & 0  & 0 & \cellcolor{red!25}8 & 0 & 0 & 0 & 0 & \cellcolor{blue!25}9 & 0  \\ \hline
\textbf{11}       & 90 & 0  & 0  & 0 & 0 & 0 & 0 & 0 & 0 & 0 & \cellcolor{blue!25}10 \\ \hline
\end{tabular}
\caption{$C_{2,2}(n, k)$}~\label{22table}
\end{table}

The following theorem completely characterizes the non-zero entries in Table \ref{22table} and is a consequence of Theorem \ref{3parts}.

\begin{theorem}~\label{22char}
$C_{2,2}(t+tk, k)=(t+tk-2)\varphi(t)$ for integers $t>1$ and $k\ge 0$, where $\varphi$ is Euler's Totient function. When $t=1$, $C_{2,2}(t+tk, k)=t+tk-1$.
\end{theorem}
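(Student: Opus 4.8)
The plan is to push everything through Theorem \ref{3parts}, which hands us the index of $\frac{a_1|a_2}{b_1|b_2}$ as an explicit gcd. Writing the bottom composition in the form $(b_1,\,n-b_1)$ and matching against the second shape $\frac{a|b}{c|n-c}$ of Theorem \ref{3parts}, the index of $\frac{a_1|a_2}{b_1|b_2}$ (with $a_1+a_2=b_1+b_2=n$) equals $\gcd(a_1+a_2,\,a_2+b_1)-1=\gcd(n,\,a_2+b_1)-1$. Hence such a seaweed has index $k$ exactly when $\gcd(n,\,a_2+b_1)=k+1$. Since a two-part composition of $n$ is determined by its single inner breakpoint, the objects being counted are in bijection with pairs $(a_2,b_1)\in\{1,\dots,n-1\}^2$, so I would reduce the whole problem to counting such pairs subject to $\gcd(n,\,a_2+b_1)=k+1$.

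First I would record the distribution of the single statistic $s:=a_2+b_1$. The number of pairs with $a_2+b_1=s$ is the triangular weight $f(s)=s-1$ for $2\le s\le n$ and $f(s)=2n-1-s$ for $n\le s\le 2n-2$, so that $C_{2,2}(n,k)=\sum_s f(s)$ taken over those $s\in\{2,\dots,2n-2\}$ with $\gcd(n,s)=k+1$. The condition $\gcd(n,s)=k+1$ forces $k+1\mid n$, which is precisely the hypothesis $n=t(k+1)=t+tk$; when $k+1\nmid n$ the count is empty, matching the zero entries of Table \ref{22table}.

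Next, setting $d:=k+1$ and $n=dt$, I would parametrize the relevant values as $s=du$ with $\gcd(u,t)=1$ and $1\le u\le 2t-1$, and rewrite the weight in the variable $u$ as $f(du)=du-1$ for $u\le t$ and $f(du)=d(2t-u)-1$ for $u\ge t$. For $t>1$ the central value $u=t$ is never coprime to $t$, so the sum splits cleanly into $u\in\{1,\dots,t-1\}$ and $u\in\{t+1,\dots,2t-1\}$. The key step is the substitution $u\mapsto 2t-u$, a bijection of the upper range onto the lower one that preserves both the coprimality condition, since $\gcd(2t-u,t)=\gcd(u,t)$, and the summand, since $f(d(2t-u))=du-1$. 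Folding the two halves together gives $C_{2,2}(n,k)=2\sum_{\gcd(u,t)=1,\,1\le u\le t-1}(du-1)$.

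To finish for $t>1$ I would invoke two classical facts about totatives: there are $\varphi(t)$ integers in $\{1,\dots,t-1\}$ coprime to $t$, and their sum equals $\tfrac{1}{2}t\varphi(t)$. Substituting these yields $C_{2,2}(n,k)=2\bigl(d\cdot\tfrac{1}{2}t\varphi(t)-\varphi(t)\bigr)=(dt-2)\varphi(t)=(n-2)\varphi(t)=(t+tk-2)\varphi(t)$, as claimed. The case $t=1$ needs separate treatment: there $\gcd(n,s)=n$ forces $n\mid s$, and the only such $s$ in $\{2,\dots,2n-2\}$ is $s=n$, contributing $f(n)=n-1=t+tk-1$ pairs. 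The main obstacle I anticipate is purely bookkeeping---pinning down the exact range of $u$ and verifying that the folding bijection is exact at the endpoints so that no boundary term is dropped or double-counted; the number-theoretic input (the totative count and the totative sum) is standard, and everything else is elementary.
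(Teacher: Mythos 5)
Your proof is correct and follows essentially the same route as the paper's: both reduce via Theorem \ref{3parts} to counting pairs with $\gcd(n,\,a_2+b_1)=k+1$, count $s-1$ pairs for each admissible sum $s$, exploit the symmetry about $s=n$ (your folding bijection $u\mapsto 2t-u$ is exactly the paper's ``factor of $2$''), and close with the totative count and the sum-of-totatives identity $\tfrac{1}{2}t\varphi(t)$. The only difference is organizational: you handle all $k$ in a single computation with $d=k+1$, whereas the paper computes the case $k=0$ first and then establishes the increment $C_{2,2}(t+tk,k)-C_{2,2}(tk,k-1)=t\varphi(t)$, which is the same underlying calculation.
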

\begin{proof}
Begin by noting that the $t=1$ case follows by reasoning similar to that given in Section 3.1. Thus, we assume $t>1$.

To fix notation, recall that we are considering seaweeds of type $\frac{a|b}{c|d}$ with $a+b=c+d=n$. Since it is assumed that $t>1$, it is the case that either $a+d<n$ or $b+c<n$. To see this, note that we must have $a+b+c+d=2n$ so that at worst $a+d=b+c=n$; but if this is the case, then $a=c$ and $b=d$ which corresponds to the case $t=1$. Thus, throughout we will assume $b+c<n$.

Next, the result above will be used to find a formula for $C_{2,2}(t+tk, k)$ at $k=0$, i.e., for $C_{2,2}(t, 0)$. Using Theorem~\ref{3parts} one finds that $$C_{2,2}(t, 0)=2\cdot|\{(b, c)|b,c<t, \gcd(t, b+c)=1\}|,$$ where the factor of 2 allows for the case that $b+c>t$ (i.e., $a+d<t$). By definition $$|\{x|x<t, \gcd(t, x)=1\}|=\varphi(x)$$ and a little thought shows that each $s\in\{x|x<t, \gcd(t, x)=1\}$ corresponds to $s-1$ pairs $$(b_1, c_1)\in\{(b, c)|b,c<t, \gcd(t, b+c)=1\}.$$ Thus, it must be the case that $C_{2,2}(t, 0)=2\sum_{i=1}^{\varphi(t)}(s_i-1)$, where $\{s_1,...,s_{\varphi(t)}\}=\{x|x<t, \gcd(t, x)=1\}.$ Using a classic result on the sum of positive integers less than and relatively prime to a positive integer $t$: $$C_{2,2}(t, 0)=2\left(\frac{t}{2}\varphi(t)-\varphi(t)\right)=t\varphi(t)-2\varphi(t).$$

Finally, it is claimed that $C_{2,2}(t+tk, k)-C_{2,2}(tk, k-1)=t\phi(t)$. Note, that as above 

$$C_{2,2}(tk, k-1)=2\cdot|\{(b, c)|b,c<tk, \gcd(tk, b+c)=k\}|.$$ Applying similar reasoning to the case $k=0$, the value $|\{x|x<tk, \gcd(tk, x)=k\}|$ is analyzed first. Basic properties of the $\gcd$ gives $p\in\{x<t|\gcd(t, x)=1\}$ if and only if $pk\in\{x<tk| \gcd(tk, x)=k\}.$ It follows that $$\varphi(t)=|\{x<t| \gcd(t, x)=1\}|=|\{x<tk| \gcd(tk, x)=k\}|.$$ Thus, similar to the case $k=0$ and now assuming 
that $\{s_1,...,s_{\varphi(t)}\}=\{x|x<t, \gcd(t, x)=1\}$ we get that $$C_{2,2}(tk, k-1)=2\sum_{i=1}^{\varphi(t)}(s_ik-1)=2(\frac{tk}{2}\varphi(t)-\varphi(t))=tk\varphi(t)-2\phi(t).$$ Therefore, $$C_{2,2}(t+tk, k)-C_{2,2}(tk, k-1)=t(k+1)\varphi(t)-2\varphi(t)-(tk\varphi(t)-2\phi(t))=t\phi(t).$$ Hence, for $t>1$, it follows that $C_{2,2}(t+tk, k)=(t+tk-2)\varphi(t)$.
\end{proof}

\section{Afterword}
\noindent
This initial investigation makes use of recent Lie algebraic technology (meanders, homotopy types, and gcd index formulas) to enumerate composition types associated with seaweed subalgebras in Type A. By doing so, it provides the framework for similar investigations in the other classical families, which likewise require analogous technologies -- recently advanced by several investigative groups as follows.

In \textbf{\cite{Panyushev1}}, Panyushev extended the Lie theoretic definition of seaweed subalgebras to the reductive algebras. If $\mf{p}$ and $\mf{p'}$ are parabolic subalgebras of a reductive Lie algebra $\mf{g}$ such that $\mf{p}+\mf{p'}=\mf{g}$, then $\mf{p}\cap\mf{p'}$ is called a \textit{seaweed subalgebra o}f $\mf{g}$ or simply $seaweed$ when $\mathfrak{g}$ is understood. For this reason, Joseph  has elsewhere 
 \textbf{\cite{Joseph}} called seaweed algebras, \textit{biparabolic}.  One can show that Type-C and Type-B seaweeds, in their standard representation, can be parametrized by a pair of partial compositions of $n$.  Indeed, in \textbf{\cite{CHM}}, Coll et al have topically extended the Type-A work of Dergachev and A. Kirillov to the Type-B and Type-C cases, providing analogous definitions of meanders and index formulas; see also \textbf{\cite{Panyushev2}}, where Type-C meanders were independently developed -- absent the index formulas based on the compositions which define the seaweed. The homotopy types for Types B and C have also been classified by the second and fourth authors who have additionally shown in unpublished work that, at least in the Type C case, the homotopy type is a conjugation invariant. Preliminary results in Types B and C suggest that the generating functions for $C(n, n-k)$, i.e., $\sum_{n>0}C(n, n-k)$, are of the form $\frac{f(x)}{(1-2x)^{k+1}}$ much like that of Type-A, which seem to be of the form $\frac{f(x)}{(1-2x)^k}$ where $f(x)$ is a polynomial with integer coefficients and has the same degree in both cases. In Types A, B, and, C, precise enumerative formulas for the number of maximal parabolic seaweeds are made possible by linear gcd index formulas.   The complete compliment of such formulas has been developed in \textbf{\cite{Coll2}} for Type-A and in \textbf{\cite{CHM}}, for Types B and C.
 
Most recently in \textbf{\cite{Panyushev3}}, Panychev and Yakimova have developed Type-D meanders and Cameron et al \textbf{\cite{Cameron}} have completed the classification of linear greatest common divisor formulas for the classical families by providing index formulas based on the defining compositions associated with a Type-D seaweed.  The Type-D case is complicated by the bifurcation point in the Dynkin diagram associated with Type-D Lie algebras, which amongst other things, allows for certain biparablics to not have the distinctive seaweed shape in their standard representations.

Follow-up work will provide analogues of the main theorems in this paper to the other classical types.



\begin{appendices}
\section{The Signature of a Meander}\label{sec:app signature}

The following lemma is a graph-theoretic reductive rendering of the well-known inductive formula of Panyushev (\textbf{\cite{Panyushev1}}, Theorem 4.2).

\begin{lemma}[Winding-down] Given a meander $M$ of type $\dfrac{a_1|a_2|...|a_m}{b_1|b_2|...|b_t}$, create a meander $M'$
by exactly one of the following moves. For all moves except the 
Component Elimination move, $M$ and $M'$ have the same homotopy type. 
\begin{enumerate}
\item {\bf Flip $(F)$:} If $a_1<b_1$, then 
$M'$ has type 
$\displaystyle\frac{b_1|b_2|...|b_t}{a_1|a_2|...|a_m}$.
    
\item {\bf Component Elimination $(C(c))$:} 
If $a_1=b_1=c$, then $M'$ has type 
$\displaystyle\frac{a_2|a_3|...|a_m}{b_2|b_3|...|b_t}.$

\item {\bf Rotation Contraction $(R)$:} If $b_1<a_1<2b_1$, then 
$M'$ has type 
$\displaystyle \frac{b_1|a_2|a_3|...|a_m}{(2b_1-a_1)|b_2|...|b_t}$.

\item {\bf Block Elimination $(B)$:} If $a_1=2b_1$, then 
$M'$ has type 
$\displaystyle\frac{b_1|a_2|..|a_m}{b_2|b_3|...|b_t}$.

\item {\bf Pure Contraction $(P)$:} If $a_1>2b_1$, then 
$M'$ has type 
$\displaystyle
\frac{(a_1-2b_1)|b_1|a_2|a_3|...|a_m}{b_2|b_3|...|b_t}$.

\end{enumerate}
\end{lemma}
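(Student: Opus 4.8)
The plan is to prove the five assertions by direct analysis of the planar meander $M = M^\A_n(\bm{a} \dd \bm{b})$, handling the Flip and Component Elimination moves first and then the three contraction moves as the substantive content. Throughout I will use that the homotopy type of a meander is recorded by its cycles together with their concentric nesting and its isolated center points, as in the definition of homotopy type; so for each move it suffices either to exhibit a homotopy equivalence from $M$ to $M'$ carrying cycles to cycles in a nesting-preserving way (for Flip, Rotation Contraction, Block Elimination, and Pure Contraction), or---for Component Elimination---to identify precisely the completed components that are discarded.

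First I would dispatch Flip. Reflecting the horizontal line of vertices across itself interchanges top and bottom edges and sends $M^\A_n(\bm{a} \dd \bm{b})$ to $M^\A_n(\bm{b} \dd \bm{a})$ by a homeomorphism of the plane; the two meanders are therefore planar-isotopic and in particular have the same homotopy type, the hypothesis $a_1 < b_1$ only recording that this normalization is invoked when the first top block is the smaller. Next I would treat Component Elimination. When $a_1 = b_1 = c$, both the top and the bottom edges on $\{v_1, \dots, v_c\}$ obey the same rule $j + k = c + 1$, so $v_j$ is joined to $v_{c+1-j}$ above and below; the induced subgraph on $\{v_1, \dots, v_c\}$ is thus a disjoint union of $\lfloor c/2 \rfloor$ doubled edges, each a bigon homotopic to a circle, plus one isolated vertex when $c$ is odd, and it meets $\{v_{c+1}, \dots, v_n\}$ in no vertex. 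Deleting these completed components leaves, after relabeling, precisely the meander of type $\frac{a_2|\cdots|a_m}{b_2|\cdots|b_t}$; this is the one move that changes the homotopy type, and it does so in the explicitly determined manner just described.

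The heart of the argument is the three contraction moves, which all apply when $a_1 > b_1$, so that $v_1$ carries a long top edge to $v_{a_1}$ overarching the first bottom block $\{v_1, \dots, v_{b_1}\}$. Here I would trace the arc connectivity explicitly: the nested top arcs $v_j \leftrightarrow v_{a_1 + 1 - j}$ under $v_1 v_{a_1}$, together with the bottom arcs $v_j \leftrightarrow v_{b_1 + 1 - j}$ of the first bottom block, interleave so that the vertices lying to the right of $v_{b_1}$ but still under the top arc can be ``folded'' back across the boundary $v_{b_1}$ and the intervening path-segments contracted. For Pure Contraction ($a_1 > 2b_1$) the top arc strictly contains the reflected copy of the bottom block, and the fold leaves a residual top block of size $a_1 - 2b_1$ beside a block of size $b_1$; for Block Elimination ($a_1 = 2b_1$) the fold is exact and no residual block survives; for Rotation Contraction ($b_1 < a_1 < 2b_1$) the fold is partial and rotates the first blocks to sizes $b_1$ and $2b_1 - a_1$. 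In each case I would build the homotopy equivalence by collapsing the contractible path-segments freed up by the fold, fixing everything from $v_{a_1 + 1}$ onward, and then check that the surviving cycles of $M$ correspond bijectively to those of $M'$ in a nesting-preserving way, with isolated center points matched as well.

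I expect Rotation Contraction to be the main obstacle, since there the top and bottom first blocks only partially overlap and the fold genuinely rearranges arcs rather than merely collapsing or passing through them; one must verify with care that no cycle is created or destroyed and that the concentric ordering of the survivors is exactly preserved. The most reliable route is to make the fold explicit as a map on vertex positions and to tabulate, over the finitely many arc-incidence patterns occurring at the left boundary, the resulting reconnection, confirming the component bijection in each pattern. Once Rotation Contraction is secured, Block Elimination and Pure Contraction fall out as the exact and overshooting limits of the same fold, completing the lemma.
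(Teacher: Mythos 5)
The first thing to note is that the paper itself does not prove this lemma: it is stated in Appendix \ref{sec:app signature} as ``a graph-theoretic reductive rendering'' of Panyushev's inductive formula (\cite{Panyushev1}, Theorem 4.2) and is imported from the earlier work of Coll et al \cite{Collar}, where the five moves were introduced. So there is no in-paper argument to compare you against; your proposal has to stand on its own as a proof, and on its own terms it has a genuine gap.

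Your treatments of Flip (a reflection of the plane interchanging the two compositions) and Component Elimination (the first block contributes $\lfloor c/2\rfloor$ nested two-vertex cycles, plus a center point when $c$ is odd, forming complete components whose deletion leaves the smaller meander) are correct and essentially complete. But the substance of the lemma is the three contraction moves, and there you stop at a plan: you yourself flag Rotation Contraction as ``the main obstacle'' and defer its verification to an unexecuted tabulation of ``arc-incidence patterns.'' Worse, the mechanism you propose --- ``make the fold explicit as a map on vertex positions'' --- fails if taken literally. Consider $M$ of type $\frac{5}{3|2}$: its edges are the top arcs $(1,5),(2,4)$ and bottom arcs $(1,3),(4,5)$, so $M$ is the single path $3\!-\!1\!-\!5\!-\!4\!-\!2$, and Rotation Contraction produces $\frac{3}{1|2}$, again a single path. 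Folding the overhang vertices $v_4,v_5$ onto positions $3,2$ and identifying coincident vertices yields a graph on $\{1,2,3\}$ with edge multiset $\{1,2\},\{2,3\},\{1,3\},\{2,3\}$, which contains a cycle: vertex identification is not a homotopy equivalence, and here it changes the component structure outright. What does work is to contract the matching formed by the top arcs having one endpoint in the overhang $\{v_{b_1+1},\dots,v_{a_1}\}$ (contracting a forest of edges \emph{is} a homotopy equivalence of graphs), and then to verify that the resulting arc system, with its planar structure, is exactly the meander of the new type; even then, the claim that concentric nesting is preserved needs a separate planarity argument, since graph homotopy equivalence alone does not see nesting. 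Your phrase ``collapsing the contractible path-segments'' gestures toward this, but until the collapse is formulated correctly (as edge contraction, not positional identification) and the component and nesting bijection is actually checked for each of $R$, $B$, and $P$, the lemma is not proved.
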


Given a meander, there exists a unique sequence of moves (elements of $\{F, C(c), R, B, P \})$ which reduce the meander down to its plane homotopy type. Such a list is called the \textit{signature} of the meander.

\begin{ex}
Consider the meander for $\frac{15}{2|5|1|5|2}$ which can be wound down to yield a planar graph with homotopy type $H(1,5,2)$, cf., Figure 2.

\begin{figure}[H]
$$\begin{tikzpicture}
	\def\Node{\node [circle,  fill, inner sep=2pt]}
	\Node (1) at (0, 0) {};
    \Node (2) at (1, 0) {};
    \Node (3) at (2, 0) {};
    \Node (4) at (3, 0) {};
    \Node (5) at (4, 0) {};
    \Node (6) at (5, 0) {};
    \Node (7) at (6, 0) {};
    \Node (8) at (7, 0) {};
    \Node (9) at (8, 0) {};
    \Node (10) at (9, 0) {};
    \Node (11) at (10, 0) {};
    \Node (12) at (11, 0) {};
    \Node (13) at (12, 0) {};
    \Node (14) at (13, 0) {};
    \Node (15) at (14, 0) {};
    \draw (1) to[bend left] (15);
    \draw (2) to[bend left] (14);
    \draw (3) to[bend left] (13);
    \draw (4) to[bend left] (12);
    \draw (5) to[bend left] (11);
    \draw (6) to[bend left] (10);
    \draw (7) to[bend left] (9);
    \draw (1) to[bend right] (2);
    \draw (3) to[bend right] (7);
    \draw (4) to[bend right] (6);
    \draw (9) to[bend right] (13);
    \draw (10) to[bend right] (12);
    \draw (14) to[bend right] (15);
\end{tikzpicture}$$
\end{figure}
\begin{figure}[H]
$$\begin{tikzpicture}
	\def\Node{\node [circle,  fill, inner sep=2pt]}
    \Node (3) at (2, 0) {};
    \Node (4) at (3, 0) {};
    \Node (5) at (4, 0) {};
    \Node (6) at (5, 0) {};
    \Node (7) at (6, 0) {};
    \Node (8) at (7, 0) {};
    \Node (9) at (8, 0) {};
    \Node (10) at (9, 0) {};
    \Node (11) at (10, 0) {};
    \Node (12) at (11, 0) {};
    \Node (13) at (12, 0) {};
    \Node (14) at (13, 0) {};
    \Node (15) at (14, 0) {};
    \draw (14) to[bend left] (15);
    \draw (3) to[bend left] (13);
    \draw (4) to[bend left] (12);
    \draw (5) to[bend left] (11);
    \draw (6) to[bend left] (10);
    \draw (7) to[bend left] (9);
    \draw (3) to[bend right] (7);
    \draw (4) to[bend right] (6);
    \draw (9) to[bend right] (13);
    \draw (10) to[bend right] (12);
    \draw (14) to[bend right] (15);
\end{tikzpicture}$$
\end{figure}
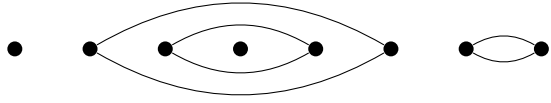
\begin{figure}[H]
$$\begin{tikzpicture}
	\def\Node{\node [circle,  fill, inner sep=2pt]}
    \Node (8) at (7, 0) {};
    \Node (9) at (8, 0) {};
    \Node (10) at (9, 0) {};
    \Node (11) at (10, 0) {};
    \Node (12) at (11, 0) {};
    \Node (13) at (12, 0) {};
    \Node (14) at (13, 0) {};
    \Node (15) at (14, 0) {};
    \draw (14) to[bend left] (15);
    \draw (9) to[bend left] (13);
    \draw (10) to[bend left] (12);
    \draw (9) to[bend right] (13);
    \draw (10) to[bend right] (12);
    \draw (14) to[bend right] (15);
\end{tikzpicture}$$
\caption{ $\frac{15}{2|5|1|5|2}$ with has signature $PPC(1)C(5)C(2)$}
\end{figure}
\end{ex}

\section{Proof of Lemma \ref{3diagstr}}\label{sec:app proof}

\begin{proof}

A meander with $n$ vertices has index $n-3$ if and only if it has one of the following forms:
\begin{itemize}
\item One cycle containing exactly four vertices, all other cycles contain exactly two vertices, and all paths contain only one vertex. We count the number of such meanders in Case \ref{4 vertex cycle} below.
\item All cycles contain exactly two vertices, two paths that each contain exactly two vertices, and all other paths contain only one vertex. These meanders are treated in Cases \ref{2 paths part 1}, 
\ref{2 paths part 2}, and \ref{2 paths part 3} below.
\item All cycles contain exactly two vertices, one path that contains exactly three vertices, and all other paths contain only one vertex. These meanders are treated in Cases \ref{3 vertex path part 1} 
and \ref{3 vertex path part 2} below.
\end{itemize}

\begin{case}\label{4 vertex cycle}
\end{case}
The meander has a cycle that contains four vertices. Such a meander must have form
\[\frac{\bm{a}|2|m-4|2|\bm{c}}{\bm{a}|m|\bm{c}} \hspace{.5cm}\text{or}\hspace{.5cm}
\frac{\bm{a}|m|\bm{c}}{\bm{a}|2|m-4|2|\bm{c}},a\]
where $m$ is an integer such that $4\leq m\leq n$, and $\bm{a}$ and $\bm{c}$ are possibly empty compositions.
If both $\bm{a}$ and $\bm{c}$ are nonempty, then there are
\begin{equation}\label{eq:first}
2\sum_{m=4}^{n-2}\sum_{i=1}^{n-m-1}2^{n-m-2},
\end{equation}
meanders of this form. In the above equation $i$ represents the sum of the parts of $\bm{a}$.

If $\bm{a}$ or $\bm{c}$ (or both) are empty, we get a contribution (to $C(n, n-3)$) of
\begin{equation}
2+4\sum_{m=4}^{n-1}2^{n-m-1},
\end{equation}
such meanders.

\begin{case}\label{2 paths part 1}
\end{case}
The meander has two paths containing two vertices such that one block contains
all four of these vertices. Such a meander must have form
\[\frac{\bm{a}|1|1|m-4|1|1|\bm{c}}{\bm{a}|m|\bm{c}} \hspace{.5cm}\text{or}\hspace{.5cm}
\frac{\bm{a}|m|\bm{c}}{\bm{a}|1|1|m-4|1|1|\bm{c}}\]
where $m$ is an integer such that $4\leq m\leq n$,
and $\bm{a}$ and $\bm{c}$ are possibly empty compositions.
The number of such meanders is identical to that of Case \ref{4 vertex cycle},
so we get contributions of
\begin{equation}
2\sum_{m=4}^{n-2}\sum_{i=1}^{n-m-1}2^{n-m-2},
\end{equation}
and
\begin{equation}
2+4\sum_{m=4}^{n-1}2^{n-m-1}.
\end{equation}


\begin{case}\label{2 paths part 2}
\end{case}
The meander has two paths containing two vertices, such that there is no block containing vertices from both of these paths. Such a meander must have the form
$$\dfrac{\bm{a}|\bm{b}|\bm{c}|\bm{d}|\bm{e}}{\bm{a}|\bm{b}'|\bm{c}|\bm{d}'|\bm{e}},$$
where
\[\dfrac{\bm{b}}{\bm{b}'}=\frac{k}{1|k-2|1}
\hspace{.5cm}\text{or}\hspace{.5cm}\dfrac{\bm{b}}{\bm{b}'}=\frac{1|k-2|1}{k},
\hspace{.5cm}\text{ and }\hspace{.5cm}
\dfrac{\bm{d}}{\bm{d}'}=\frac{l}{1|l-2|1}
\hspace{.5cm}\text{or}\hspace{.5cm}\dfrac{\bm{d}}{\bm{d}'}=\frac{1|l-2|1}{l},\] 
for some integers $k\geq 2$ and $l\geq 2$ such that $k+l\leq n$,
and $\bm{a}, \bm{c}$, and $\bm{e}$ are possibly empty compositions.
If all of $\bm{a}, \bm{c}$, and $\bm{e}$ are nonempty, we get a contribution of 
\begin{equation}
4\sum_{m=4}^{n-3}~\sum_{i=1}^{n-m-2}~\sum_{j=1}^{n-m-i-1}2^{n-m-3}(m-3),
\end{equation}
where $m=l+k$, $i$ is the sum of the parts of $\bm{a}$, 
and $j$ is the sum of the parts of $\bm{c}$.

If exactly one of $\bm{a}, \bm{c}$, or $\bm{e}$ is empty, we get a contribution of 
\begin{equation}
12\sum_{m=4}^{n-2}\sum_{i=1}^{n-m-1}2^{n-m-2}(m-3),
\end{equation}
where $m=l+k$ and $i$ is the sum of the parts of $\bm{a}$.

If exactly two of $\bm{a}, \bm{c}$, or $\bm{e}$ is empty, we get a contribution of 
\begin{equation}
12\sum_{m=4}^{n-1}2^{n-m-1}(m-3),
\end{equation}
where $m=l+k$.

Finally, if all of $\bm{a}, \bm{c}$, and $\bm{e}$ are empty, we get a contribution of 
\begin{equation}
4(n-3).
\end{equation}

\begin{case}\label{2 paths part 3}
\end{case}
The meander has two paths containing two vertices, such that there is a block containing at least one vertex, but no more than three vertices from both of these paths. Such a meander must have the form 
\[\dfrac{\bm{a}|3|1|\bm{b}}{\bm{a}|1|3|\bm{b}}\hspace{.5cm}\text{or}\hspace{.5cm}
\dfrac{\bm{a}|1|3|\bm{b}}{\bm{a}|3|1|\bm{b}},\]
where $\bm{a}$ and $\bm{b}$ are possibly empty compositions.
If both $\bm{a}$ and $\bm{b}$ are nonempty, we get a contribution of
\begin{equation}
2\sum_{i=1}^{n-5}2^{n-6},
\end{equation}
where $i$ is the sum of the parts of $\bm{a}$.

If either $\bm{a}$ or $\bm{b}$ is empty (or both if $n=4$), then we get a contribution of 
\begin{equation}
4\left(2^{n-5}\right)=2^{n-3}.
\end{equation}

\begin{case}\label{3 vertex path part 1}
\end{case}
The meander has a path containing three vertices, such that there is no block that 
contains both of the endpoints of this path. Such a meander must have the form
\[\frac{\bm{a}|1|l-2|k|\bm{c}}{\bm{a}|l|k-2|1|\bm{c}}\hspace{.5cm}\text{or}\hspace{.5cm}
\frac{\bm{a}|l|k-2|1|\bm{c}}{\bm{a}|1|l-2|k|\bm{c}},\]
for some integers $k\geq 2$ and $l\geq 2$ such that $k+l-1\leq n$, 
and $\bm{a}$ and $\bm{c}$ are possibly empty compositions.
If both of $\bm{a}$ and $\bm{c}$ are nonempty, we get a contribution of
\begin{equation}
2\sum_{m=3}^{n-2}\sum_{i=1}^{n-m-1}2^{n-m-2}(m-2),
\end{equation}
where $m=l+k-1$ and $i$ is the sum of the parts of $\bm{a}$.

If exactly one of $\bm{a}$ or $\bm{c}$ is empty, we get a contribution of
\begin{equation}
4\sum_{m=3}^{n-1}2^{n-m-1}(m-2),
\end{equation}
where $m=l+k-1$.

If both of $\bm{a}$ and $\bm{c}$ are empty, we get a contribution of 
\begin{equation}
2(n-2).
\end{equation}

\begin{case}\label{3 vertex path part 2}
\end{case}
The meander has a path containing three vertices, such that there is a block that
contains both of the endpoints of this path. Such a meander must have the form
\[\frac{\bm{a}|1|2|\bm{b}}{\bm{a}|3|\bm{b}}\hspace{.5cm}\text{or}\hspace{.5cm}
\frac{\bm{a}|2|1|\bm{b}}{\bm{a}|3|\bm{b}}\hspace{.5cm}\text{or}\hspace{.5cm}
\frac{\bm{a}|3|\bm{b}}{\bm{a}|2|1|\bm{b}}\hspace{.5cm}\text{or}\hspace{.5cm}
\frac{\bm{a}|3|\bm{b}}{\bm{a}|1|2|\bm{b}},\]
where $\bm{a}$ and $\bm{b}$ are (possibly empty) compositions.
If both $\bm{a}$ and $\bm{b}$ are nonempty, we get a contribution of
\begin{equation}
4\sum_{i=1}^{n-4}2^{n-5},
\end{equation}
where $i$ is the sum of the parts of $\bm{a}$.

If either $\bm{a}$ or $\bm{b}$ is empty (or both are empty when $n=3$), then we get a contribution of
\begin{equation}\label{eq:last}
8\left(2^{n-4}\right)=2^{n-1}.
\end{equation}

\noindent
Therefore, $C(n,n-3)$ is equal to the sum of the expressions 
in \eqref{eq:first}--\eqref{eq:last}.

\end{proof}

\end{appendices}

\bibliographystyle{abbrv}

\bibliography{bibliography.bib}

\end{document}